\DeclarePairedDelimiter\abs{\lvert}{\rvert}
\newtheorem{theorem}{Theorem}[section]
\newtheorem{definition}{Definition}[section]
\newtheorem{lemma}{Lemma}[section]
\newtheorem{remark}{Remark}[section]
\newtheorem{fact}{Fact}
\newtheorem{corollary}{Corollary}[section]
\newtheorem{example}{Example}[section]
\newtheorem{proposition}{Proposition}[section]
\begin{document}

\title[An Unknotting Index for Virtual Links]{An Unknotting Index for Virtual Links}

\author{Kirandeep KAUR}
\address{Department of Mathematics, Indian Institute of Technology Ropar, India}   
\email{kirandeep.kaur@iitrpr.ac.in} 

\author{Madeti PRABHAKAR}
\address{Department of Mathematics, Indian Institute of Technology Ropar, India}   
\email{prabhakar@iitrpr.ac.in}

\author{Andrei VESNIN}
\address{Tomsk State University, Tomsk, 634050, Russia \\ and Sobolev Institute of Mathematics, Novosibirsk, 630090, Russia} 
\email{vesnin@math.nsc.ru} 


\subjclass[2010]{Primary 57M25; Secondary 57M90}

\keywords{Virtual link, unknotting index, pretzel link, span value}

\begin{abstract}
Given a virtual link diagram $D$, we define its unknotting index $U(D)$ to be minimum among $(m, n)$ tuples,  where $m$ stands for the number of crossings virtualized and $n$ stands for the number of classical crossing changes, to obtain a trivial link diagram. By using span of a diagram and linking number of a diagram we provide a lower bound for unknotting index of a virtual link. Then using warping degree of a diagram, we obtain an upper bound. Both these bounds are applied to find  unknotting index for virtual links obtained from pretzel links by virtualizing some crossings.
\end{abstract}





\maketitle

\section*{Introduction} \label{section1} 

Virtual knot theory was introduced by L.H.~Kauffman \cite{kauffman1999virtual} as a natural generalization of the theory of classical knots. Some knot invariants have been naturally extended to virtual knot invariants and, more generally, to virtual link invariants, as well. In the recent past, several invariants, like arrow polynomial~\cite{dye2009virtual}, index polynomial~\cite{im2010index}, multi-variable polynomial~\cite{manturov2003multi} and polynomial invariants of virtual knots~\cite{kpv-1} and links~\cite{silver2003polynomial} have been introduced to distinguish two given virtual knots or links. Another approach that can be extended from classical to virtual links to construct interesting invariant  is based on unknotting moves. One of the unknotting moves for virtual knots is known as \emph{virtualization}, which is a replacement of classical crossing by virtual crossing.  Observe, that classical unknotting move, that is replacement of a classical crossing to another type of classical crossing, is not an unknotting operation for virtual knots. 

In~\cite{unknotting}, K.~Kaur, S.~Kamada, A.~Kawauchi and M.~Prabhakar introduced an unknotting invariant for virtual knots, called an \emph{unknotting index for virtual knots}. We extend the concept of unknotting index for the case of virtual links and present lower and upper bound for this invariant. To demonstrate the method, bases on these bounds, we provide the unknotting index for a large class of virtual links obtained from pretzel links by applying virtualization moves to some crossings.  

This paper is organized as follows. Section~\ref{section1} contains preliminaries that are required to prove the main results of the paper. Namely, we define unknotting index for virtual links and review the concept of Gauss diagram for $n$-component virtual links. To obtain a lower bound on unknotting index, we define span of the virtual link and for an upper bound, we define warping degree for virtual links. In Section~\ref{section2}, we provide a lower bound for the unknotting index, see   Theorem~\ref{thm-bound}, and for upper bound, see  Theorem~\ref{thm-upb}. Using these bounds, in Section~\ref{section3} we determine  unknotting index for large class of virtual links that are obtained from classical pretzel links  by virtualizing some classical crossings.

\section{Preliminaries} \label{section1} 

A diagram of virtual link has two type of crossings: (classical) crossings and virtual crossings. In pictures given below virtual crossings are encircled by a small circles. Two virtual link diagrams are said to be \emph{equivalent} if one can be deformed into another by using a finite sequence of classical Reidemeister moves RI, RII, RIII, as shown in Fig.~\ref{fig1a}, and virtual Reidemeister moves VRI, VRII, VRIII, SV, as shown in Fig.~\ref{fig1b}.
\begin{figure}[!ht]
\smallskip 
\begin{center}
\subfigure[Classical Reidemeister moves.]{\includegraphics[scale=0.45]{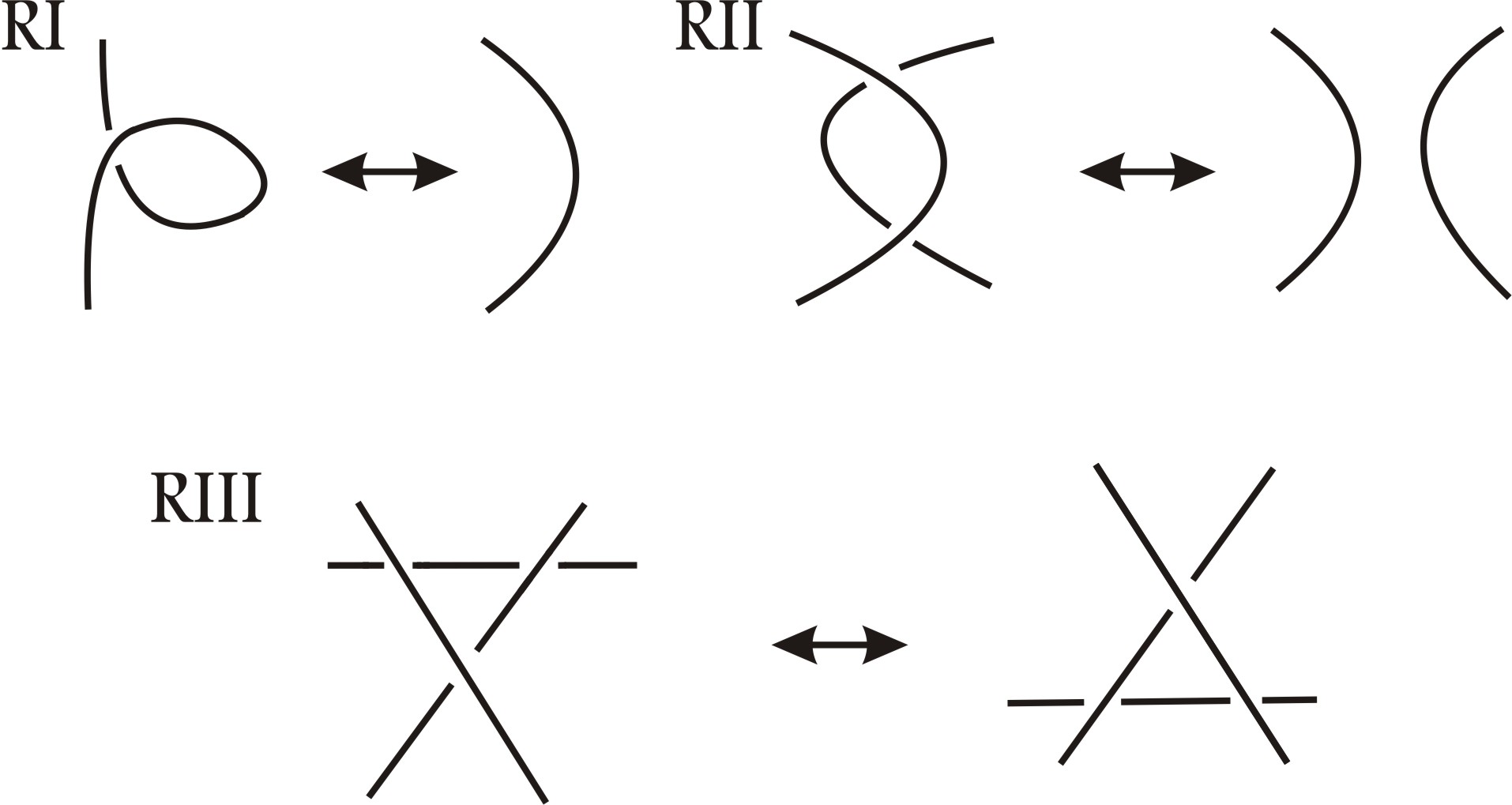} \label{fig1a}}
\subfigure[Virtual Reidemeister moves.]{\includegraphics[scale=0.45]{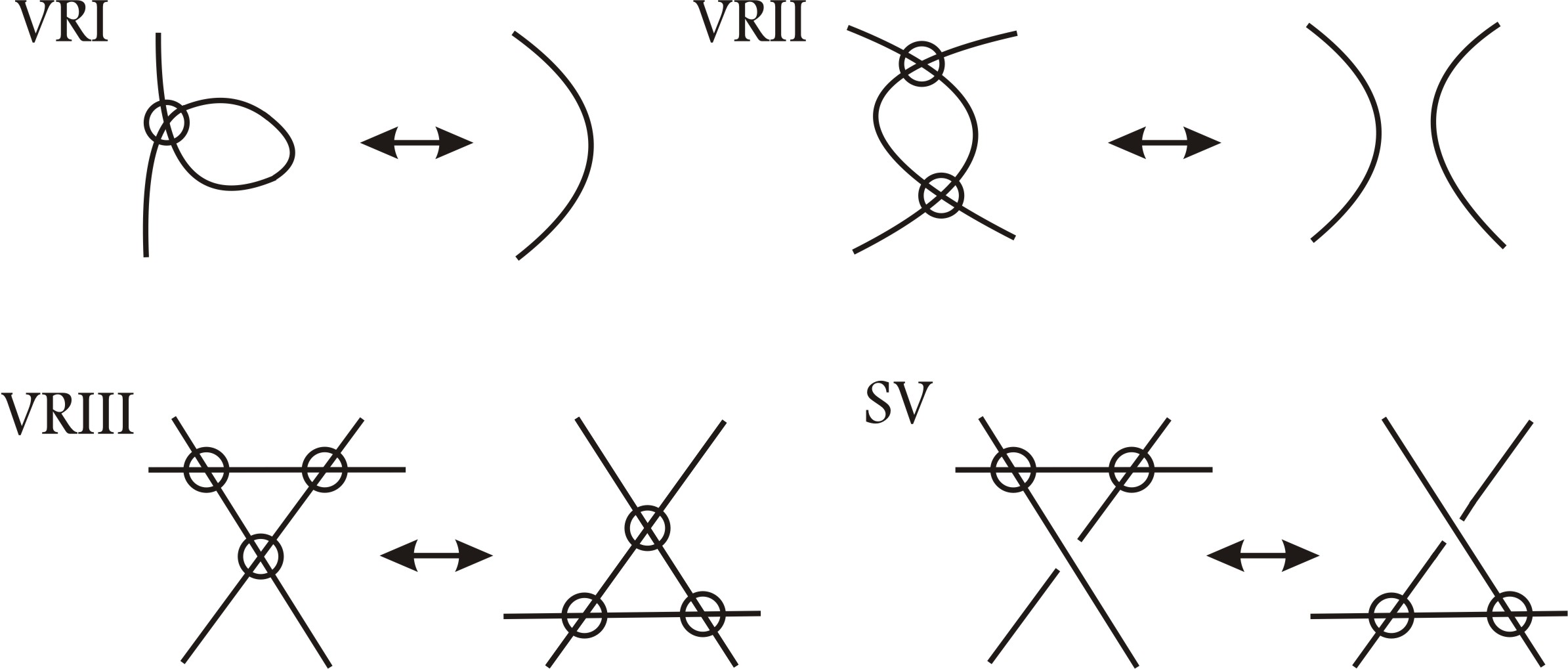} \label{fig1b}} 
\caption{Two kinds of Reidemeister moves.} \label{rm}
\end{center} 
\end{figure}

Given a virtual link diagram $D$ and an ordered pair $(m,n)$ of non negative integers, the diagram $D$ is said to be $(m,n)$-\emph{unknottable} if, by virtualizing $m$ classical crossings and by applying crossing change operation to $n$ classical crossings of $D$, the resulting diagram can deformed into a diagram of a trivial link.  Obviously, if $D$ has $c(D)$ crossings, then $D$ is $(c(D),0)$-unknottable. We define  \emph{unknotting index} of $D$, denoted by $U(D)$, to be minimum among all such pairs $(m,n)$ for which $D$ is $(m,n)$-unknottable. Here the minimality is taken with respect to the dictionary ordering. In Fig.~\ref{fig2}, we present examples of virtual link diagrams and their unknotting index, which are easy to compute.
 \begin{figure}[!ht]
{\centering
\subfigure[$U(D)= (1,0)$]{\includegraphics[scale=0.36]{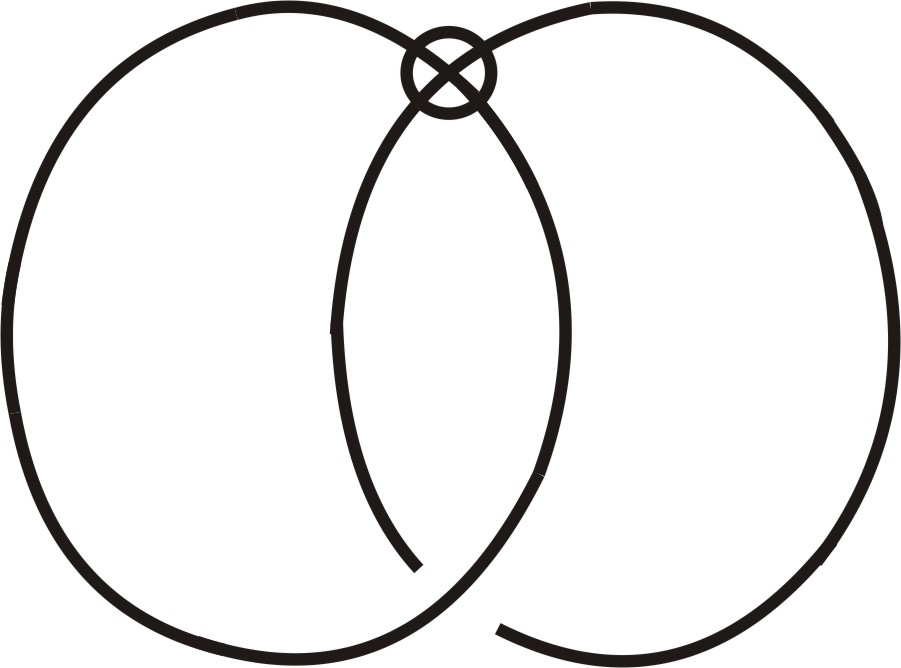} \label{hopf}}
 \hspace{1.cm}
\subfigure[$U(D)=(1,0)$]{\includegraphics[scale=0.4]{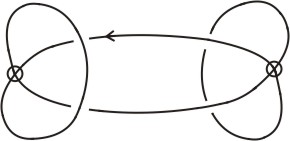} \label{knkt} }
 \hspace{1.cm}
\subfigure[$U(D)=(0,2)$] {\includegraphics[scale=0.38]{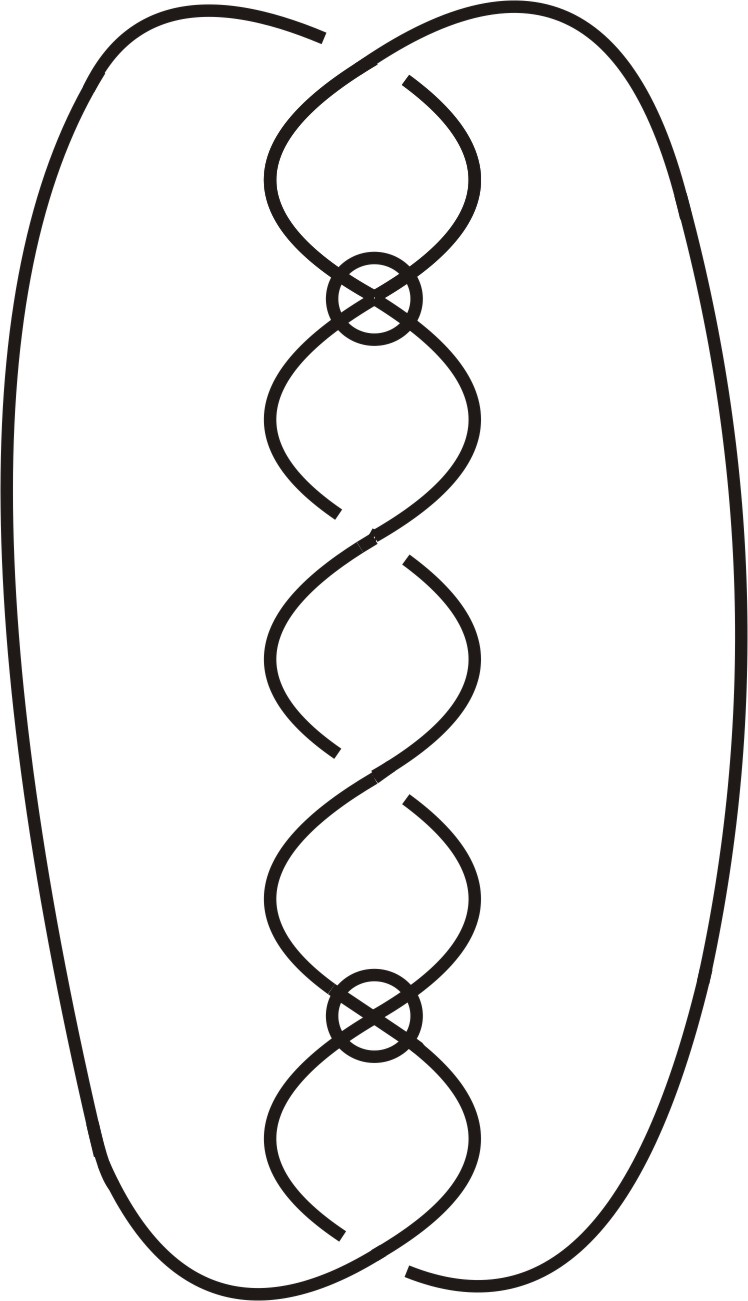} }
\caption{Virtual link diagrams and their unknotting indices.} \label{fig2}
}
\end{figure} 

\begin{definition} 
{\rm The \emph{unknotting index} $U(L)$ of a virtual link $L$ is defined as $U(L)=\min U(D)$, where minimum is taken over all diagrams $D$ of $L$.
}
\end{definition}
 
It is easy to observe that a virtual link $L$ is trivial if and only if $ U(L)=(0,0)$. For classical link $L$, it is obvious to see that $U(L)\leq (0,u(L))$, where $u(L)$ is the usual unknotting number of $L$.

In general, it is a difficult problem to find the unknotting index for a given virtual link. In case of virtual knots, some lower bounds are provided on this unknotting index in~\cite{unknotting} using $n$-$th$ writhe invariant  $J_n (K)$, introduced in~\cite{satoh2014writhes}, see definition~\ref{def1.6}. 

\begin{proposition} {\rm \cite[Proposition~4.2]{unknotting},~\cite[Theorem~1.5]{satoh2014writhes} } \label{p1}
Let $K$ be a virtual knot. Then the following properties hold:  
\begin{itemize}
\item[(1)]  If $J_k(K) \neq J_{-k}(K)$ for some $k \in {\mathbb Z}\setminus \{0\}$, then $(1,0) \leq U(K)$.
\item[(2)]  $(0, \frac{1}{2} \sum_{k \neq 0} | J_k(K) | ) \leq U(K)$.
\end{itemize}
\end{proposition}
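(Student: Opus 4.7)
The plan is to exploit the behaviour of $J_n$ under the only two local operations beyond Reidemeister equivalence that figure in the definition of $U$: a classical crossing change and a virtualisation. Since each $J_n$ is already a virtual-knot invariant, Reidemeister moves cost nothing, and the entire burden of reducing $K$ to the unknot (which has $J_k = 0$ for all $k$) falls on the crossing changes (budget $n$) and the virtualisations (budget $m$). The strategy for each part is to exhibit a non-negative combination of the $J_k$'s that vanishes on the unknot and that exactly one of the two budgets is allowed to move.

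For part (1), the key assertion to establish is that the differences $J_k - J_{-k}$, for $k \neq 0$, are invariant under crossing change. Working in the Gauss diagram, a crossing change at a chord $c$ with sign $\epsilon(c)$ and index $\operatorname{Ind}(c) = k$ reverses the arrow of $c$ and flips its sign, so it swaps the pair $(\epsilon(c),k)$ to $(-\epsilon(c),-k)$ while leaving the sign and index of every other chord unchanged. A direct calculation then gives $\Delta J_k = \Delta J_{-k} = -\epsilon(c)$, so $J_k - J_{-k}$ is preserved. Combined with the Reidemeister invariance of each $J_n$, the quantity $J_k - J_{-k}$ is unchanged by any finite sequence of Reidemeister moves and crossing changes. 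Since on the trivial knot $J_k - J_{-k} = 0$, the hypothesis $J_k(K) \neq J_{-k}(K)$ forces at least one virtualisation, i.e.\ $(1,0) \leq U(K)$ in the lexicographic order.

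For part (2), set $W(K) := \frac{1}{2} \sum_{k \neq 0} |J_k(K)|$. The same local computation shows that a single crossing change at a chord of index $k$ changes only $J_k$ and $J_{-k}$, and each of them by at most $1$ in absolute value, so $W$ decreases by at most $1$ per crossing change, while remaining invariant under Reidemeister moves. Because $W$ vanishes on the trivial knot, any unknotting of $K$ using no virtualisations must consist of at least $W(K)$ crossing changes, which is precisely the lexicographic inequality $(0, W(K)) \leq U(K)$.

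The main obstacle, and where the real work lives, is the Gauss-diagram calculation that a crossing change at a chord $c$ sends the pair $(\epsilon(c),\operatorname{Ind}(c))$ to $(-\epsilon(c),-\operatorname{Ind}(c))$ and leaves every other chord's sign--index data alone. Using the definition of the index as a signed count of chords linked with $c$ (with signs determined by the arrow direction of $c$), the verification reduces to the observation that reversing $c$'s arrow simultaneously flips the local writhe at $c$ and the sign convention governing the contributions of the chords linked with $c$. Once this local invariance is in hand, both bounds follow by a short counting argument, reproducing the cited statements of \cite[Theorem~1.5]{satoh2014writhes} and \cite[Proposition~4.2]{unknotting}.
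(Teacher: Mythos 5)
Your proof is correct. The paper states this proposition only as a citation to Satoh--Taniguchi and to the earlier unknotting-index paper, without reproducing an argument; your Gauss-diagram computation that a crossing change sends $(\operatorname{sgn}(c),\operatorname{Ind}(c))$ to $(-\operatorname{sgn}(c),-\operatorname{Ind}(c))$ while leaving every other chord's sign and index unchanged is exactly the standard argument underlying those references, and the two counting consequences you draw (invariance of $J_k-J_{-k}$ under crossing change for part (1), and a decrease of at most $1$ per crossing change in $\frac{1}{2}\sum_{k\neq 0}|J_k|$ for part (2)) are both right.
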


A \emph{flat virtual knot diagram} is a virtual knot diagram with ignoring over/un\-der information at crossings. A virtual knot diagram $D$ can be deformed into unknot by applying crossing change operations if and only if the flat virtual knot diagram corresponding to $D$ presents the trivial flat virtual knot. By Proposition~\ref{p1}, the flat virtual knot,  corresponding to $K$, is non-trivial if there exists an integer $k$ such that $J_k(K) \neq J_{-k}(K)$.

Now, let us turn to the case of virtual links. We will provide a lower bound on the unknotting index for a given virtual link. Namely, we will modify the lower bound given in Proposition~\ref{p1} using $\operatorname{span}$ and linking number of  diagram. We recall the definition of linking number and Gauss diagram and review span invariant, which we use to find the lower bound.   

\begin{definition} {\rm 
For an $n$-component virtual link $L=K_{1} \cup K_{2}\cup \ldots \cup K_{n}$, the \emph{linking number} $\operatorname{lk} (L)$ is defined as
$$
\operatorname{lk} (L)=\frac{1}{2}\sum_{c_{k}\in K_{i} \cap K_{j}, \text{\rm with} i \neq j} \operatorname{sgn}(c_{k}),
$$
where $\operatorname{sgn}(c_{k})$ is the sign of $c_{k}$, defined as in Fig.~\ref{fig3}. 
\begin{figure}[!ht]
\centering 
\unitlength=0.6mm
\begin{picture}(0,35)(0,-5)
\thicklines
\qbezier(-40,10)(-40,10)(-20,30)
\qbezier(-40,30)(-40,30)(-32,22) 
\qbezier(-20,10)(-20,10)(-28,18)
\put(-35,25){\vector(-1,1){5}}
\put(-25,25){\vector(1,1){5}}
\put(-30,0){\makebox(0,0)[cc]{$\operatorname{sgn}(c)=+1$}}
\qbezier(40,10)(40,10)(20,30)
\qbezier(40,30)(40,30)(32,22) 
\qbezier(20,10)(20,10)(28,18)
\put(25,25){\vector(-1,1){5}}
\put(35,25){\vector(1,1){5}}
\put(30,0){\makebox(0,0)[cc]{$\operatorname{sgn}(c)=-1$}}
\end{picture}
\caption{Sign of crossing $c$.} \label{fig3}
\end{figure}
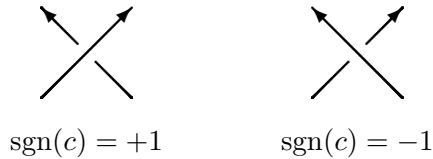 
}
\end{definition}   

In \cite{cheng2013polynomial}, Z.~Cheng and H.~Gao defined an invariant, called  \emph{span}, for 2-component virtual links using Gauss diagram. Remark that span is same as the absolute value of wriggle number provided by L. C.~Folwaczny and L. H.~Kauffman in \cite{folwaczny2013linking}. Consider a diagram $D=D_{1}\cup D_{2}$ of a virtual link $L = K_1 \cup K_2$. Let us traverse along $D_{1}$ and consider crossings of $D_{1}$ and $D_{2}$. If $r_{+}$ (respectively, $r_{-})$ is the number of over linking crossings with positive sign (respectively, negative sign) and $\ell_{+}$ (respectively, $\ell_{-})$ is the number of under linking crossings with positive sign (respectively, negative sign), then $\operatorname{span}(D)$ of $D$ is defined as
$$
\operatorname{span}(D)= | r_{+} - r_{-} - \ell_{+} + \ell_{-} |.
$$
It is easy to see, that we will get the same result by traverse along $D_2$. Since, due to~\cite{cheng2013polynomial}, $\operatorname{span} (D)$ of diagram $D$ of a link $L$ is an invariant for $L$, we denote it by $\operatorname{span}(L)$.  It is easy to see, that for a classical 2-component link $L$ we get $\operatorname{span}(L) = 0$. 

\begin{definition}{\rm 
For a virtual link $L=K_{1}\cup K_{2} \cup \ldots \cup K_{n}$, we define \emph{span} of $L$ as 
$$ 
\operatorname{span} (L) =  \sum _{i \neq j} \operatorname{span} (K_{i} \cup K_{j}).
$$
}
\end{definition}

\noindent
Since $\operatorname{span} (K_{i} \cup K_{j})$ is a virtual link invariant, $\operatorname{span} (L)$ is also a virtual link invariant. 

The following property is obvious and we state it as Lemma for further references. 

\begin{lemma} \label{lemma1.1}
If  $D'$ is a diagram obtained from a 2-component virtual link diagram $D$ by virtualizing one crossing, then $|\operatorname{span} (D) - \operatorname{span} (D') | \leq 1$.  
\end{lemma}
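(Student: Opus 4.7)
The plan is to analyze the effect of virtualizing a single crossing on the four counts $r_+, r_-, \ell_+, \ell_-$ that define $\operatorname{span}(D)$, and then invoke the reverse triangle inequality.

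First, I would split into cases according to the type of the crossing being virtualized. Since $\operatorname{span}(D)$ only counts \emph{inter-component} (linking) crossings between $K_1$ and $K_2$, a self-crossing of $K_1$ or $K_2$ contributes nothing to any of $r_+, r_-, \ell_+, \ell_-$. In that case virtualizing it leaves all four counts unchanged, so $\operatorname{span}(D) = \operatorname{span}(D')$ and the inequality holds trivially.

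The substantive case is when the virtualized crossing $c$ is a linking crossing. After virtualization, $c$ becomes a virtual crossing and no longer contributes to the signed count, while every other classical crossing of $D$ persists in $D'$ with the same type (over/under), the same sign, and the same role as a linking crossing. Therefore exactly one of the four quantities decreases by $1$, depending on which of the four types $c$ had (over-positive, over-negative, under-positive, under-negative); the other three are unaltered. Writing $S(D) = r_+ - r_- - \ell_+ + \ell_-$, it follows that $S(D') = S(D) \pm 1$, where the sign depends on the type of $c$.

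Finally, the reverse triangle inequality gives
\[
\bigl|\operatorname{span}(D) - \operatorname{span}(D')\bigr| = \bigl||S(D)| - |S(D')|\bigr| \leq |S(D) - S(D')| = 1,
\]
which is the claimed bound. I do not anticipate any real obstacle here: the only point requiring care is to verify that virtualization does not affect the signs or the over/under classification of any \emph{other} crossing, which is immediate from the local nature of the virtualization move.
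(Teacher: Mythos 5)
Your proof is correct. The paper actually states this lemma without proof, calling it obvious, and your case analysis (self-crossing leaves all four counts unchanged; a linking crossing removes exactly one unit from the signed sum $r_+ - r_- - \ell_+ + \ell_-$, after which the reverse triangle inequality gives the bound) is precisely the justification the authors leave implicit.
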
 

It is obvious that $\operatorname{span}(L)$ leaves invariant under crossing change operation. Also,  for two equivalent 2-component virtual link diagrams, $D=D_{1}\cup D_{2}$ and $D'=D'_{1}\cup D'_{2}$, their linking crossings are related as $r'_{+} = r_{+}+ s$, $r'_{-}=r_{-} + s$ for some $s \in \mathbb Z$ and $\ell'_{+} = \ell_{+} + t$, $\ell'_{-} = \ell_{-} + t$, for some $t \in \mathbb{Z}$.

The $\operatorname{span}(L)$ of a virtual link $L$ can be calculated through Gauss diagrams. We define Gauss diagram for an oriented $n$-component virtual link as follows. 

\begin{definition} {\rm \emph{Gauss diagram} $G(D)$ of an $n$-component virtual link diagram $D$ consists of $n$ oriented circles with over/under passing information in crossings be presented by directed  chords and segments. For a given crossing $c \in D$ the chord (or segment) in $G(D)$  is directed from over crossing $\overline{c}$ to under crossing $\underline{c}$.   
}
\end{definition}
 
Fig.~\ref{fig4b} depicts the Gauss diagram corresponding to the virtual link diagram presented in~Fig.~\ref{fig4a}.
\begin{figure}[!ht]
{\centering
\subfigure[$D= D _{1}\cup D_{2}$]{ \includegraphics[scale=0.7]{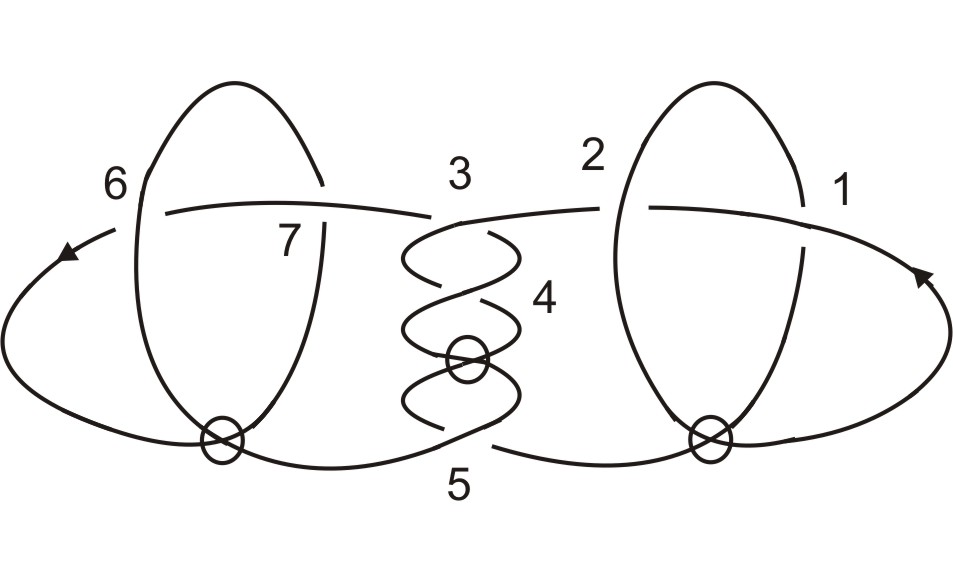} \label{fig4a}}
\hspace{1.cm}
\subfigure[$G(D)=G( D _{1} \cup D_{2})$] {\includegraphics[scale=0.6]{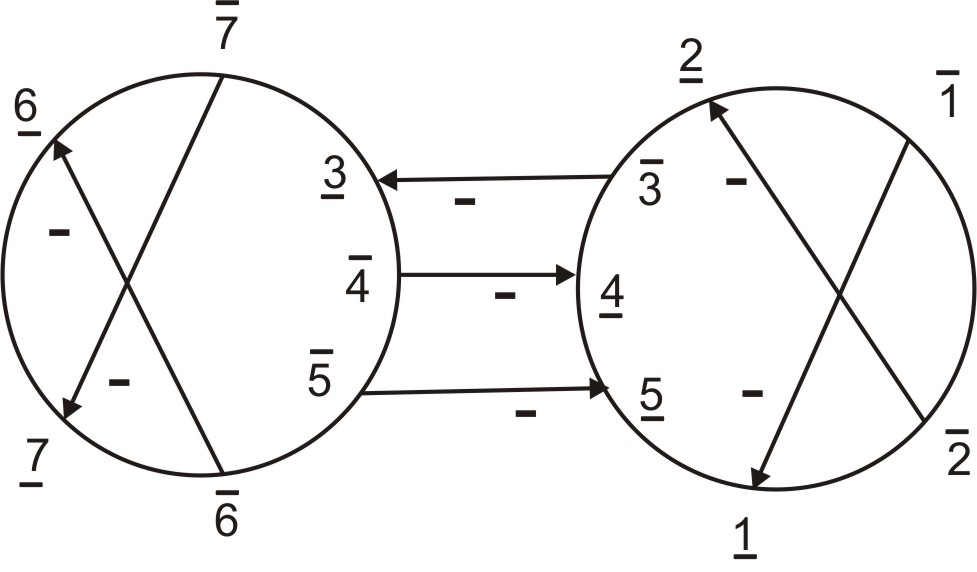} \label{fig4b}}
\caption{Virtual link diagram $D$ and its Gauss diagram $G(D)$.}
}
\end{figure}

\noindent In \cite{cheng2013polynomial}, Z. Cheng and H. Gao assigned an integer value, called \emph{index value}, to each classical crossing $c$ of a virtual knot diagram using Gauss diagrams and denoted it by $\operatorname{Ind}(c)$. 
\begin{definition}{\rm
Let $D$ be a virtual knot diagram and $\gamma_c$ be a chord of Gauss diagram $G(D)$.  
Let $\mathrm{r}_{+}$  (respectively, $\mathrm{r}_{-}$) be the number of positive  (respectively, negative) chords intersecting  $\gamma_c$ transversely from right to left as shown in Fig~\ref{Fig5}.
Let $l_{+}$  (respectively, $l_{-}$) be the number of positive (respectively, negative) chords intersecting $\gamma_{c}$ transversely from left to right.   
Then the {\it  index}  of $\gamma_c$ is defined as 
\[{\rm Ind}(\gamma_c)=\mathrm{r}_{+}-\mathrm{r}_{-}-l_{+}+ l_{-}.\]}
\end{definition}
\begin{figure}[!ht] 
{
\centering
\includegraphics[scale=0.38]{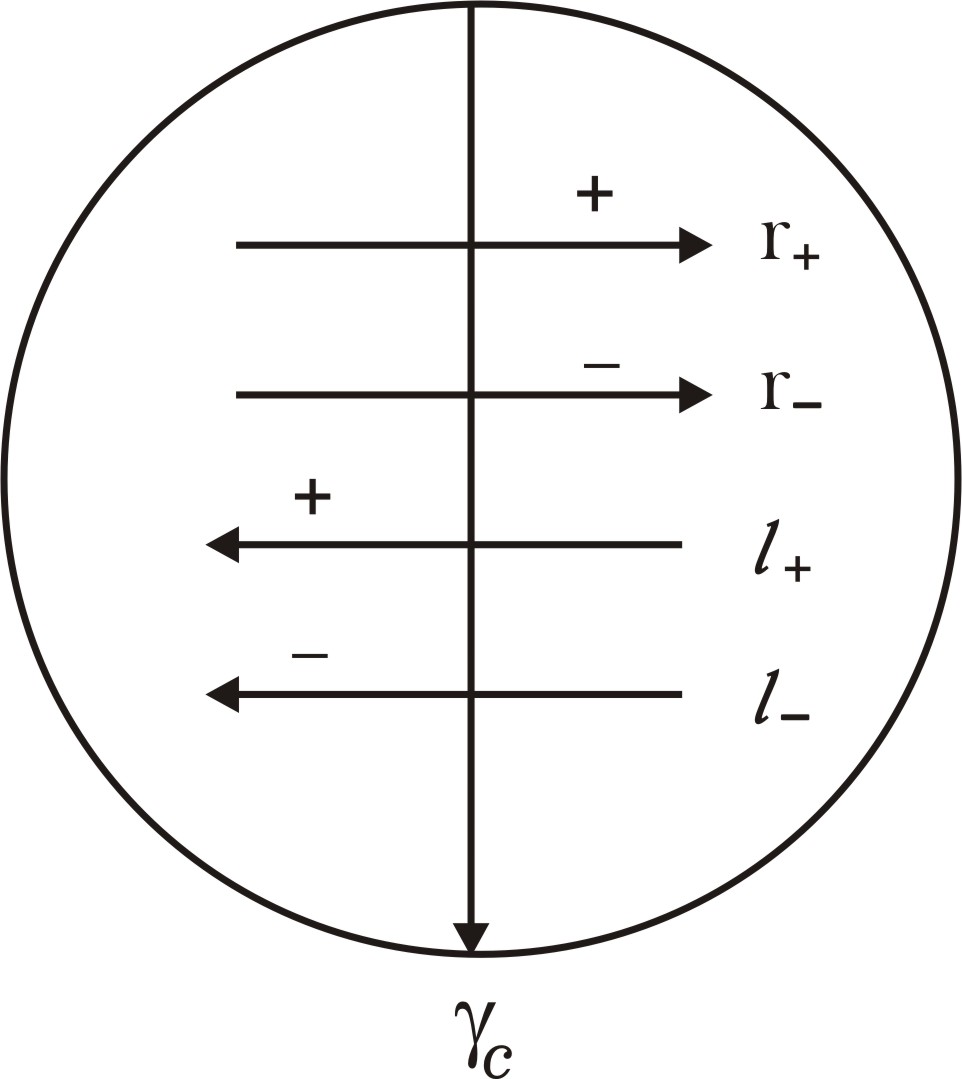} 
\caption{Chords intersecting transversely to a chord $\gamma_c$}
\label{Fig5}
 }
\end{figure}
\noindent The {\it index value} $\operatorname{Ind}(c)$ of a crossing $c$ in $D$ is given by the {\it index value} ${\rm Ind} (\gamma_{c})$ of the corresponding chord $\gamma_{c}$ in $G(D)$.

\begin{definition} \label{def1.6}
{\rm For each $n \in \mathbb{Z}\setminus \{0\}$, the  \emph{$n$-th writhe $J_n(D)$} of an oriented virtual knot diagram $D$ is defined as the sum of signs of those crossings in $D$, whose index value is $n.$ Hence,
\[J_n(D)=\displaystyle \sum_{c:~{\rm Ind}(c)=n}\operatorname{sgn}(c).\] 
By \cite{satoh2014writhes} the $n$-th writhe, $J_{n}(D)$, is a virtual knot invariant.}
\end{definition}

To obtain an upper bound on the unknotting index, we define warping degree for virtual links. In \cite{shimizu2011warping}, A.~Shimizu defined warping crossing points for a link diagram. Here we use the same terminology for virtual links. Let $D=D^{1}\cup D^{2} \cup \ldots \cup D^{n}$ be an orientated virtual link diagram and $D_{a}=D^{1}_{a_{1}} \cup D^{2}_{a_{2}} \cup \ldots \cup D^{n}_{a_{n}}$ denotes the based diagram of $D$ with the base point sequence $a= \langle a_{1},a_{2}, \ldots, a_{n}\rangle$, where $a_i$ is a non-crossing point on $D^{i}$ for each $1\leq i\leq n$. A self crossing $c$ in $D^{i}_{a_i}$ is said to be a warping crossing point, if we encounter $c$ first at under crossing point while moving from $a_i$ along the orientation in $D^{i}_{a_i}$. A linking crossing $c$ between $D^{i}_{a_i}$ and $D^{j}_{a_j}$ is said to be a warping crossing point, if $c$ is an under crossing of $D^{i}_{a_i}$ for $1\leq i<j \leq n$.

Then the \emph{warping degree} of $D_{a}$, denoted by $d(D_{a})$, is defined as the minimum number of crossing points that have to change in $D_{a}$ from under to over starting from $a_{i}$ in each $D^{i}_{a_{i}}$, such that the resulting based diagram with base point sequence $\langle a_{1},a_{2}, \ldots, a_{n}\rangle$ has no warping crossing point.
 
\begin{definition}{\rm 
The \emph{warping degree} of a virtual link diagram $D$ is defined as
$$
d(D)= \min \lbrace d(D_{a}) \mid a ~\mbox{is a base point sequence} \rbrace.
$$
}
\end{definition}

If $D$ is a classical link diagram with $d(D)=0$, then $D$ presents a trivial link. This is in general not true in case of virtual link diagrams. The warping degree is zero for the virtual link diagrams shown in Fig.~\ref{hopf} and Fig.~\ref{5}, even though these diagrams does not present trivial link. Moreover, if $D$ is diagram of classical link, then $u(D)\leq d(D)$. But this is in general not true for virtual links whose usual unknotting number exist. For virtual trefoil knot diagram shown in Fig.~\ref{virtual_trefoil}, we have $u(D)=1$ and $d(D)=0$, thus $u(D)\nleq d(D)$. In Section~\ref{section2}, we will use warping degree to establish an upper bound on unknotting number for virtual links.
\begin{figure}[!ht] 
{\centering
\includegraphics[scale=0.4]{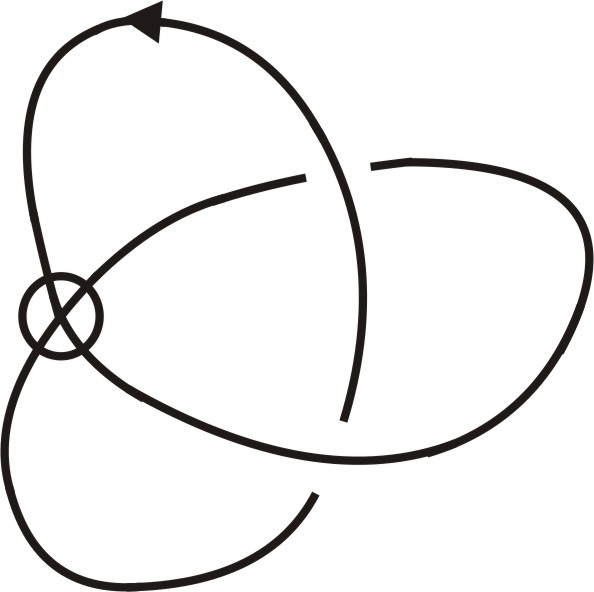} 
\caption{A diagram of virtual trefoil knot.}\label{virtual_trefoil}
 }
\end{figure}

\section{Bounds on Unknotting Index}  \label{section2}

In this section, we will provide bounds on  unknotting index for virtual links.

\begin{lemma} \label{lem:span}
Let $D$ be a virtual link diagram. Then $\operatorname{span}(D)$ is equal to the minimum number of crossings in $D$ which should be virtualized to obtain a diagram $D'$ such that $\operatorname{span}(D')=0$.  
\end{lemma}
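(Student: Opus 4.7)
The plan is to establish both inequalities $m \leq \operatorname{span}(D)$ and $m \geq \operatorname{span}(D)$, where $m$ denotes the minimum number of virtualizations needed to reach a diagram of span zero. The structural observation behind both directions is that $\operatorname{span}(D)$ decomposes as a sum over pairs of components, self-crossings contribute nothing to any term, and each linking crossing lies between a unique pair of components; hence a single virtualization affects at most one pair-term of the sum.

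For the lower bound $m \geq \operatorname{span}(D)$, I would apply Lemma~\ref{lemma1.1} pair by pair. A single virtualization of a linking crossing between components $K_i$ and $K_j$ leaves every other pair-span $\operatorname{span}(K_k \cup K_\ell)$ unchanged, and by Lemma~\ref{lemma1.1} it changes $\operatorname{span}(K_i \cup K_j)$ by at most one. Consequently each virtualization decreases $\operatorname{span}(D)$ by at most one, and driving the total span down to zero therefore forces $m \geq \operatorname{span}(D)$.

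For the upper bound $m \leq \operatorname{span}(D)$, I would give an explicit construction pair by pair. Fix a pair $(i,j)$ and write $E = r_+ - r_- - \ell_+ + \ell_-$ for the signed linking-crossing sum of $D_i \cup D_j$, so $\operatorname{span}(K_i \cup K_j) = |E|$. Virtualizing a positive over-linking crossing or a negative under-linking crossing decreases $E$ by exactly one; the other two sign/strand types increase $E$ by one. When $E \geq 0$ the pool of $E$-decreasing crossings has size $r_+ + \ell_-$, and the inequality $r_+ + \ell_- \geq E$ reduces to $r_- + \ell_+ \geq 0$, which is trivially true. Hence $|E|$ suitably chosen virtualizations bring that pair's span to zero, and the case $E \leq 0$ is symmetric. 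Because the linking crossings are partitioned among unordered pairs of components, performing this procedure in parallel for every pair yields a diagram $D'$ with $\operatorname{span}(D') = 0$, realized by exactly $\operatorname{span}(D)$ virtualizations.

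The main technical ingredient is the availability estimate $r_+ + \ell_- \geq E$ (together with its mirror $r_- + \ell_+ \geq -E$), which guarantees that the pair-by-pair construction never runs out of crossings of the required sign/strand type; once this is in hand, the rest of the argument is straightforward bookkeeping verifying that virtualizations chosen for different pair-summands do not interfere with one another.
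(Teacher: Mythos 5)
Your proposal is correct and follows essentially the same route as the paper: the lower bound via the observation that a single virtualization alters only one pair-term and, by Lemma~\ref{lemma1.1}, changes it by at most one (the paper phrases this as a telescoping sum), and the upper bound via the same pair-by-pair recipe of virtualizing positive over-linking or negative under-linking crossings (or the mirror types) until each pair-span vanishes. Your explicit availability estimate $r_+ + \ell_- \geq E$ is a small bookkeeping step the paper leaves implicit, but it does not change the argument.
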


\begin{proof} 
Let $D=D^{0} = D^{0}_{1} \cup \ldots \cup D^{0}_{n}$ be an $n$-component virtual link diagram, and $Âý D^{m} = D^{m}_{1} \cup \ldots \cup D^{m}_{n}$ be a diagram with $\operatorname{span}(D^{m}) = 0$. Let $\{ D^{0}, D^{1}, \ldots, D^{m} \}$ be a sequence of $n$-components virtual link diagrams, where $D^{i+1} = D^{i+1}_{1} \cup \ldots \cup D^{i+1}_{n}$ is obtained from $D^{i} = D^{i}_{1} \cup \ldots \cup D^{i}_{n}$ by virtualizing exactly one crossing. Denote this crossing by $c_{i}$ and suppose that $c_{i} \in D^{i}_{k(i)} \cup D^{i}_{\ell(i)}$ for some $k(i)$ and $\ell(i)$. Then
$$ 
\begin{gathered}
\operatorname{span} (D) = | \operatorname{span} (D^{0}) - \operatorname{span}(D^{m}) | = \left| \sum _{i=0}^{m-1} \left( \operatorname{span}(D^{i}) - \operatorname{span} (D^{i+1}) \right) \right|  \\ \leq   \sum _{i=0}^{m-1} \left| \operatorname{span}(D^{i}) - \operatorname{span} (D^{i+1}) \right| \\ = \sum_{i=0}^{m-1} \left| \operatorname{span} (D^{i}_{k(i)} \cup D^{i}_{\ell(i)}) - \operatorname{span} (D^{i+1}_{k(i)} \cup D^{i+1}_{\ell(i)} ) \right|= m, 
\end{gathered} 
$$
where for the last step we used Lemma~\ref{lemma1.1}. 

To obtain the inverse inequality, let us start with virtual link diagram $D=D^{0}_{1} \cup \ldots \cup D^{0}_{n}$ and consider a pair of components $D^{0}_{i}\cup D^{0}_{j}, ~i < j$, and traverse along $D^{0}_{i}$. Let $r_{ij+}$ (respectively, $r_{ij-})$ be the number of over linking crossings with positive sign (respectively, negative sign) and $\ell_{ij+}$ (respectively, $\ell_{ij-})$ be the number of under linking crossings with positive sign (respectively, negative sign). Then $\operatorname{span} (D^{0}_{i} \cup D^{0}_{j}) = |r_{ij+} - r_{ij-} - \ell_{ij+} + \ell_{ij-}|$. 
  Now, in each $D^{0}_{i}\cup D^{0}_{j}, ~i < j$, virtualize $\operatorname{span}(D^{0}_{i}\cup D^{0}_{j} )$ number of crossings as follows; 
\begin{itemize}
\item If $ r_{ij+}+\ell_{ij-}> r_{ij-}+\ell_{ij+}$,  then we virtualize a crossing which is either a positive sign over crossing or a negative sign under crossing, and
\item if $r_{ij+} + \ell_{ij-} < r_{ij-} + \ell_{ij+}$, then we virtualize a crossing which is either a negative sign over crossing or a positive sign under crossing. 
 \end{itemize}
 
\noindent After virtualizing  $\displaystyle \sum _{i\neq j} \operatorname{span} (D^{0}_{i} \cup D^{0}_{j}) $ number of crossings in $D$,  the resulting diagram has zero span value and $ m\leq \operatorname{span(D)} $. Hence $m=\operatorname{span}(D)$. 
\end{proof}

\begin{corollary} \label{c1}
If $L$ is an $n$-component virtual link, then  $U(L)\geq (\operatorname{span}(L),0)$.
\end{corollary}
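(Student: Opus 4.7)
The plan is to combine Lemma~\ref{lem:span} with two invariance facts already noted in the preliminaries: that $\operatorname{span}$ is a virtual link invariant, and that $\operatorname{span}$ is left unchanged by a classical crossing change. Together these force any $(m,n)$-unknotting sequence to virtualize at least $\operatorname{span}(L)$ crossings.

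More concretely, I would start by fixing an arbitrary diagram $D$ of $L$ with $U(D) = (m,n)$, so that $D$ is $(m,n)$-unknottable. By definition there is a sequence that first virtualizes $m$ classical crossings of $D$, producing some intermediate diagram $D'$, and then performs $n$ crossing changes on $D'$ to yield a diagram $D''$ of a trivial link. Since a trivial link has span $0$ and crossing changes preserve span (as observed just after the definition of $\operatorname{span}$), we get $\operatorname{span}(D') = \operatorname{span}(D'') = 0$. Thus $D'$ is obtained from $D$ by virtualizing $m$ crossings and has zero span, so Lemma~\ref{lem:span} yields $m \geq \operatorname{span}(D)$. Finally, $\operatorname{span}(D) = \operatorname{span}(L)$ since $\operatorname{span}$ is a virtual link invariant.

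It remains to translate $m \geq \operatorname{span}(L)$ into the inequality $U(D) \geq (\operatorname{span}(L), 0)$ in the dictionary order. If $m > \operatorname{span}(L)$ then $(m,n) > (\operatorname{span}(L), 0)$, while if $m = \operatorname{span}(L)$ then $(m,n) = (\operatorname{span}(L), n) \geq (\operatorname{span}(L), 0)$ because $n \geq 0$. Taking the minimum over all diagrams $D$ of $L$ then gives $U(L) \geq (\operatorname{span}(L), 0)$.

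There is no real obstacle here; the only subtle point is the order of operations in the definition of $(m,n)$-unknottable — one needs to be sure that counting virtualizations first and crossing changes second is consistent with the statement of Lemma~\ref{lem:span}, which is a statement purely about virtualizations lowering span. That is exactly why invoking invariance of span under crossing changes is the key conceptual step: it lets us peel off the $n$ crossing changes at the end and reduce the problem to the pure virtualization count controlled by the lemma.
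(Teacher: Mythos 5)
Your proof is correct and follows essentially the same route as the paper's: invoking the invariance of $\operatorname{span}$ under crossing changes to reduce to the virtualization count controlled by Lemma~\ref{lem:span}, then translating $m \geq \operatorname{span}(L)$ into the dictionary-order inequality. The paper's own argument is just a terser version of this; your write-up merely spells out the peeling-off of the $n$ crossing changes and the minimization over diagrams explicitly.
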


\begin{proof} Observe that $\operatorname{span} (L)$ of virtual link $L$ is invariant under crossing change operation. Thus, if $\operatorname{span} (L) \neq 0$ then $L$ is non-classical link. Hence by Lemma~\ref{lem:span}, $U(L)\geq (\operatorname{span}(L),0 )$. 
 \end{proof}

\begin{remark}{\rm 
If $L$ is a virtual link with $U(L)=(m,n)$ and $\operatorname{span} (L) =0$, then $m$ need not be zero. For example, span of the virtual link presented by the diagram given in Fig.~\ref{5} is zero, but the  unknotting index is $(2,0)$.}
\end{remark}

\begin{figure}[!ht]
{
\centering
\subfigure[Virtual  link diagram $D$]
{
\includegraphics[scale=0.4]{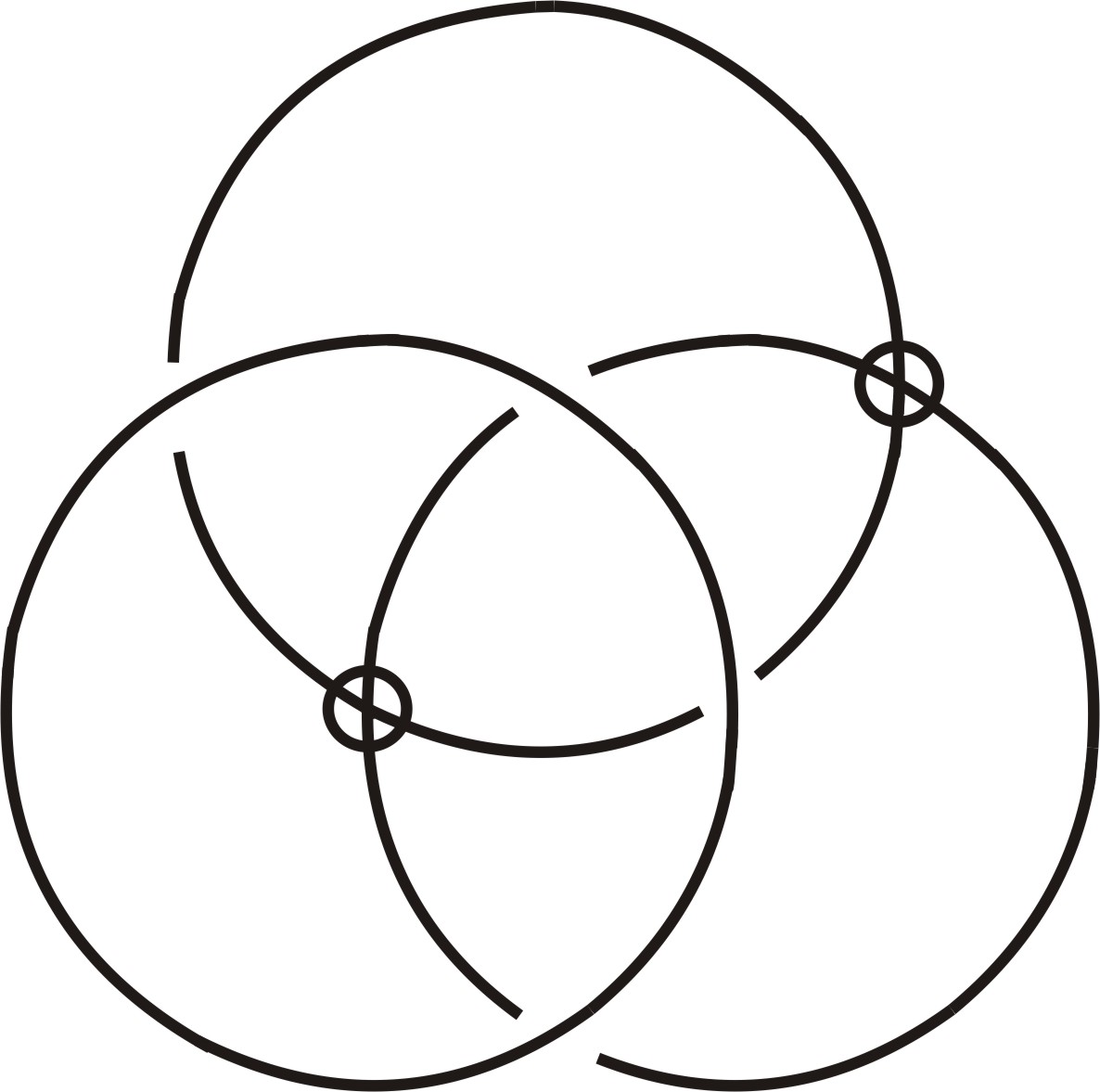}
\label{5}
}
 \hspace{.8cm}
  \subfigure[$P_{L}(t)=2t^{-1}-t^{-2}-1$]
{
\includegraphics[scale=0.4]{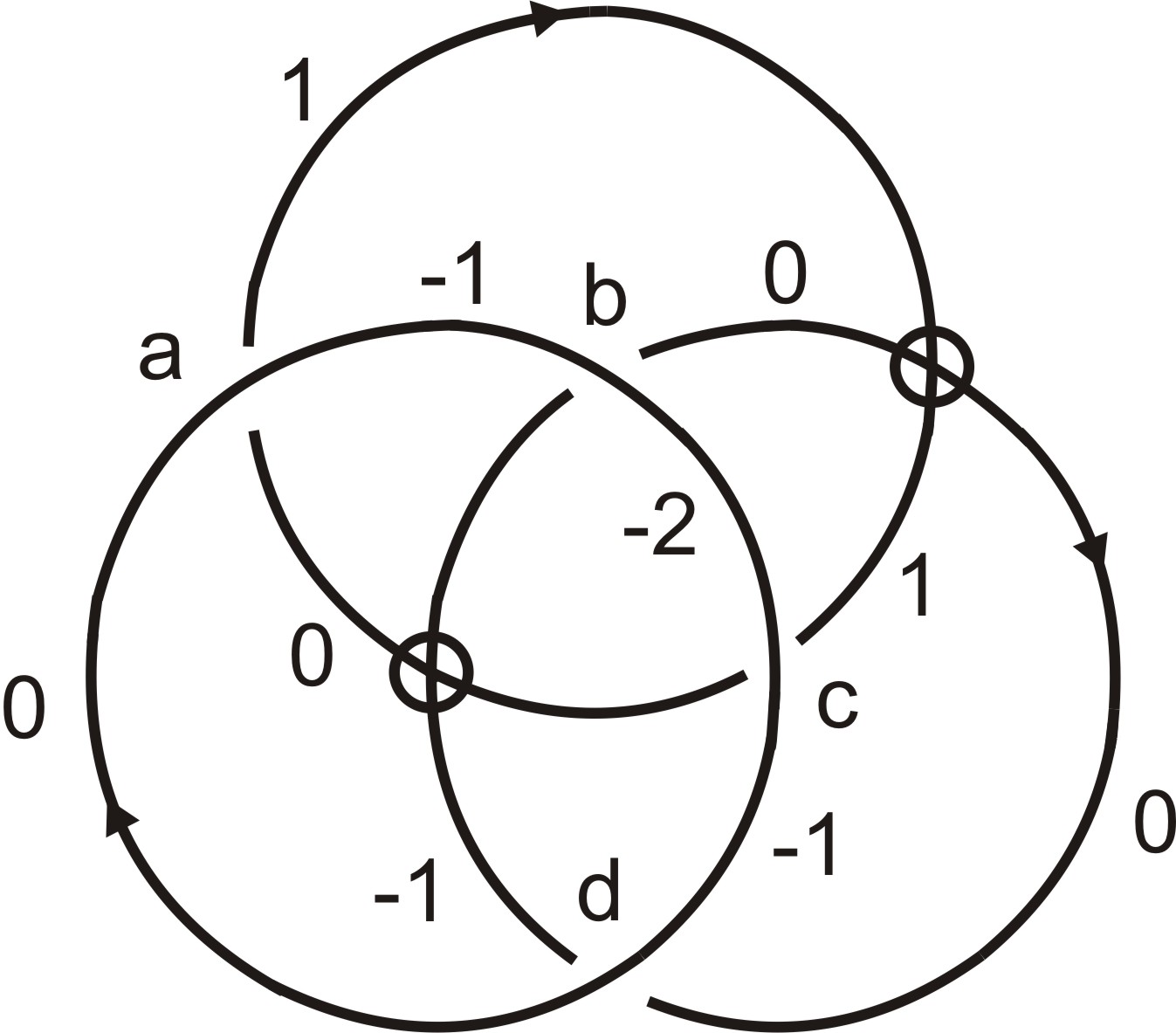}
\label{6}
 }
\caption{A virtual link and its labelling for an affine index polynomial.}
 }
 \end{figure}
 
\begin{example} {\rm 
Let $L$ be a virtual link represented by the diagram $D$ as shown in Fig.~\ref{5}.
Observe that the affine index polynomial, $P_L(t)$, shown in Fig.~\ref{6} never reduces to zero by changing crossings in $L$. Therefore the flat virtual link corresponding to $L$ is non trivial and at least one virtualization is needed to turn $L$ to unlink.  After one virtualization in $D$ the resulting diagram, say $L'$, has $\operatorname{span}(L')=1$. Thus $(2,0)\leq U(L)$ and by virtualizing crossing $a$ and $c$, $L$ can be deformed to trivial link.
}
\end{example}
 
 Suppose that $L$ is a virtual link with $U(L)=(m,n)$. From Corollary~\ref{c1}, $(\operatorname{span}(L),0)$  is a lower bound on $(m,n)$. By fixing $\operatorname{span}(L)$ as a lower bound on $m$, we establish a lower bound on $n$ using the concept of linking number. For this we introduce some notions as follows. 

Let $C(D)$ be the set of all classical crossings in a virtual link diagram $D$ and $S$ be any subset of $C(D)$. Denote $D_{S}$ as a diagram obtained from $D$ by virtualizing all the crossings of $S$. Denote the cardinality of the set $S$ by $|S|$. Now consider $\Lambda(D) $ defined by 
$$
\Lambda(D) =\{ S \subseteq C(D) \mid  |S| = \operatorname{span}(D) \quad \text{and} \quad  \operatorname{span} (D_{S})=0\}.
$$

Let us consider $\ell_D= \min \{ \abs{ \operatorname{lk}(D_{S})} \mid  S \in \Lambda(D) \}$. 

\begin{example}
The value $\ell_{D}$ is not an invariant for $L$. It is shown in Fig.~\ref{ld}, that for two equivalent diagrams $D$ and $D'$ of a 3-component link $L$ values $\ell_{D}$ and $\ell_{D'}$ are not equal. 
\begin{figure}[!ht]
{\centering
\subfigure[$D$]{\includegraphics[scale=0.5]{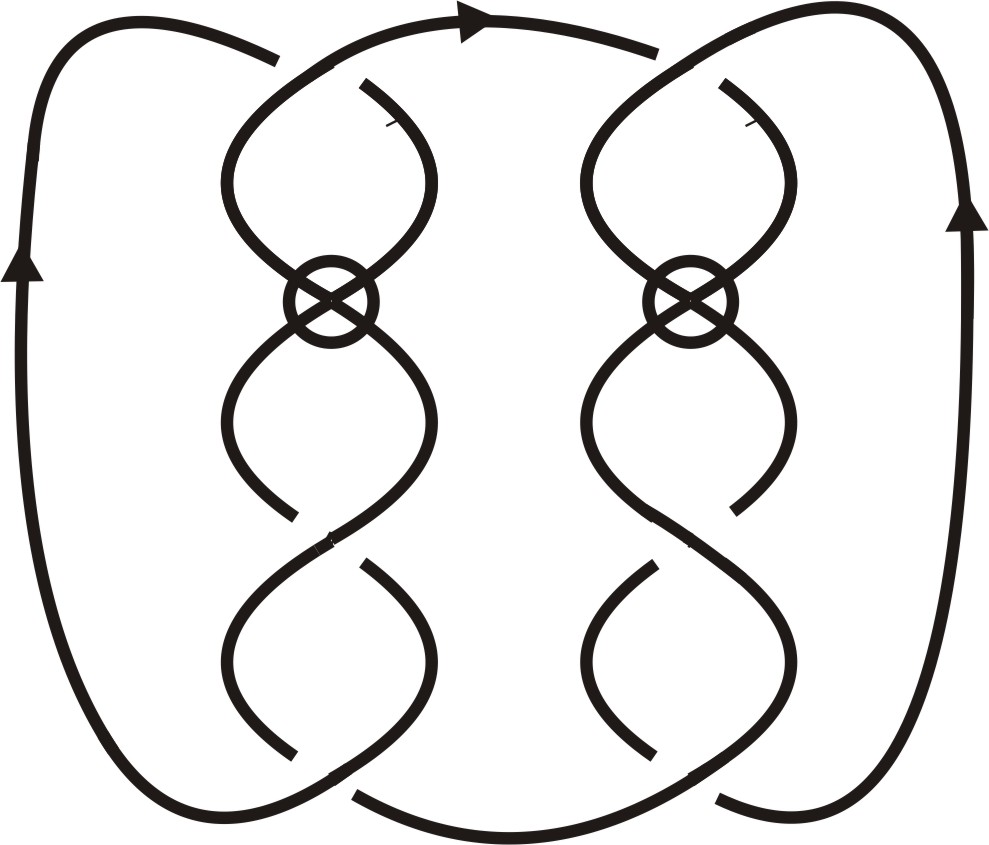} \label{tw}}
\hspace{.8cm}
\subfigure[$D'$]{\includegraphics[scale=0.5]{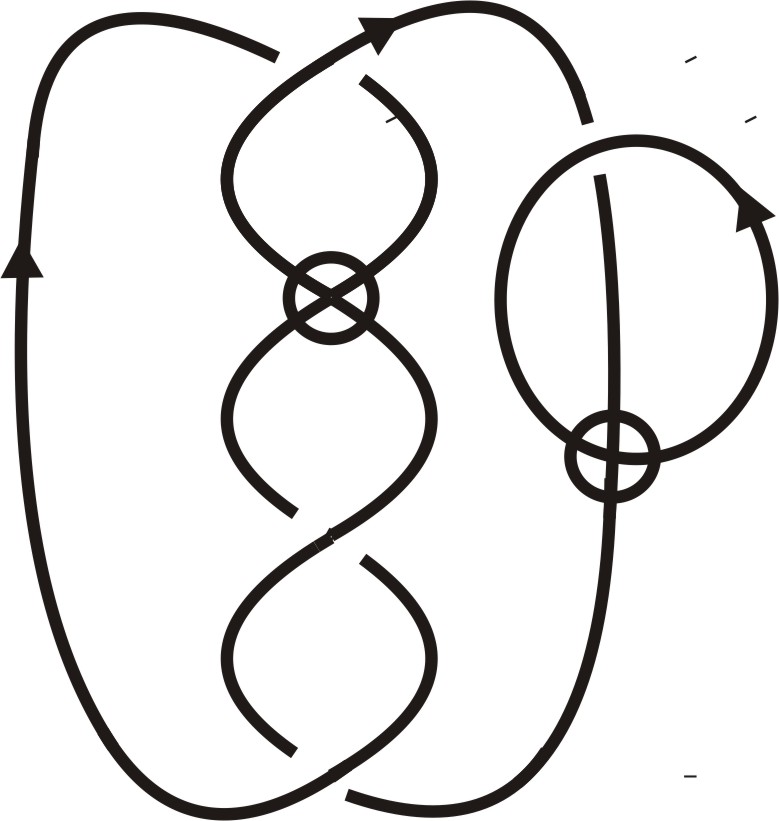} \label{x3}}
\caption{Diagrams $D$ and $D'$ with $\ell_{D}=0$ and $\ell_{D'}=1$.}
\label{ld}
}
\end{figure}
\end{example}  

But in case of $2$-component virtual links,  we observe that $\ell_{D}$ is a virtual link invariant.

\begin{theorem} \label{thm-span}
If $D=D_{1}\cup D_{2}$ is a virtual link diagram, then either $\ell_D= 0$ or  
$$
\ell_D=\abs{\operatorname{lk}(D)}-\frac{1}{2}\operatorname{span} (D) .
$$
\end{theorem}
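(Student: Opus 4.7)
The plan is to parameterize the admissible subsets $S\in\Lambda(D)$ explicitly and then minimize $|\operatorname{lk}(D_S)|$ over them. Let $r_\pm,\ell_\pm$ denote the numbers of positive/negative over/under linking crossings between $D_1$ and $D_2$, and set $a=r_+-r_-$, $b=\ell_+-\ell_-$, so that $\operatorname{lk}(D)=\tfrac12(a+b)$ and $\operatorname{span}(D)=|a-b|$. Swapping the roles of $D_1$ and $D_2$ swaps $(r_\pm,\ell_\pm)\mapsto(\ell_\pm,r_\pm)$ and hence $a\leftrightarrow b$, while fixing both $\operatorname{lk}(D)$ and $\operatorname{span}(D)$, so I may assume $a\ge b$ and $\operatorname{span}(D)=a-b$.

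The first step is to describe $\Lambda(D)$ precisely. Virtualizing a positive over or a negative under crossing decreases $a-b$ by $1$, whereas virtualizing a negative over or a positive under crossing increases $a-b$ by $1$. Since $|S|=\operatorname{span}(D)$ and the total change of $a-b$ must equal $-\operatorname{span}(D)$, the triangle inequality forces every virtualization in $S$ to decrease $a-b$. Consequently each $S\in\Lambda(D)$ is determined by a pair $(p,q)$ of non-negative integers with $p+q=\operatorname{span}(D)$, $p\le r_+$, $q\le\ell_-$, where $p$ is the number of positive over and $q$ the number of negative under crossings virtualized.

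The second step is a short computation of $\operatorname{lk}(D_S)$. Removing $p$ crossings of sign $+1$ and $q$ crossings of sign $-1$ from the linking sum gives
\[
\operatorname{lk}(D_S)=\operatorname{lk}(D)+\tfrac12(q-p)=\operatorname{lk}(D)+\tfrac12\operatorname{span}(D)-p=a-p,
\]
using the identity $\operatorname{lk}(D)+\tfrac12\operatorname{span}(D)=a$. Hence $\ell_D=\min|a-p|$ as $p$ ranges over the integer interval $[\max(0,a-\ell_+),\min(r_+,a-b)]$ coming from the constraints, where I used $(a-b)-\ell_-=a-\ell_+$.

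The final step is a three-way sign analysis on $(a,b)$. If $a\ge 0\ge b$, then $p=a$ is admissible (the upper endpoint $a-b\ge a$ since $b\le 0$), so $\ell_D=0$; a direct check shows $|\operatorname{lk}(D)|-\tfrac12\operatorname{span}(D)\le 0$ here, consistent with the first alternative. If $b>0$, then $a\ge b>0$, and the nearest admissible $p$ to $a$ is the upper endpoint $a-b$, yielding $\ell_D=b=\tfrac12(a+b)-\tfrac12(a-b)=|\operatorname{lk}(D)|-\tfrac12\operatorname{span}(D)$. If $a<0$, then $b\le a<0$, and the nearest admissible $p$ is $0$, yielding $\ell_D=-a=-\tfrac12(a+b)-\tfrac12(a-b)=|\operatorname{lk}(D)|-\tfrac12\operatorname{span}(D)$. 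The main obstacle I expect is the rigidity in the first step: ruling out ``overshooting'' virtualization sequences in which some steps raise and others lower $a-b$. Once that rigidity is in hand, the remaining case analysis is a routine calculation.
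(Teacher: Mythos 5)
Your proposal is correct, and it rests on the same key observation as the paper's proof: since each virtualization changes the quantity $a-b=r_{+}-r_{-}-\ell_{+}+\ell_{-}$ by at most one, any $S\in\Lambda(D)$ must consist entirely of crossings that move $a-b$ toward zero (in the paper this appears as the claim that the positive crossings of $S$ are over linking crossings and the negative ones are under linking crossings, or vice versa). Where you differ is in the execution, and your route is tidier: by parameterizing every $S\in\Lambda(D)$ by the number $p$ of positive over crossings virtualized and computing $\operatorname{lk}(D_S)=a-p$, you reduce $\ell_D$ to minimizing $|a-p|$ over an explicit integer interval, so the attainment of the value and its minimality come out of a single calculation; the paper instead treats six subcases, in each one exhibiting an extremal $S$ and then running a separate contradiction argument to rule out smaller values of $|\operatorname{lk}(D_S)|$ (including $0$). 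Your use of the $D_1\leftrightarrow D_2$ symmetry to assume $a\ge b$ also halves the paper's Case 1/Case 2 duplication, and your sign trichotomy on $(a,b)$ corresponds exactly to the paper's subcases 1.2b, 1.1 and 1.2a. Two small points worth a sentence in a final write-up: a crossing of $S$ could a priori be a self-crossing, which changes $a-b$ by $0$, but your triangle-inequality argument already excludes this since the total decrease must be $\operatorname{span}(D)$ in $\operatorname{span}(D)$ steps; and one should note that the admissible interval for $p$ is nonempty (equivalently $\Lambda(D)\neq\emptyset$), which is immediate, and that the degenerate case $\operatorname{span}(D)=0$, where $\Lambda(D)=\{\emptyset\}$ and $\ell_D=|\operatorname{lk}(D)|$, is covered by your formula.
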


\begin{proof} Let $D=D_{1}\cup D_{2}$ be a 2-component virtual link diagram. Without loss of generality consider traversing along $D_{1}$. Recall that 
$$\operatorname{span} (D) = \abs{ r_{+} + \ell_{-} - r_{-} - \ell_{+}},$$
where $r_{+}$ (respectively, $r_{-})$ is the number of positive (respectively, negative) over linking crossings and $\ell_{+}$ (respectively, $\ell_{-})$ is the number of positive (respectively, negative) under linking crossings.

We will show that either $\ell_D= 0$ or $ \ell_D=\abs{\operatorname{lk}(D)}-\frac{1}{2}\operatorname{span} (D).$  

If  $\operatorname{span}(D)=0$,  then $\Lambda(D)$ contains only empty set and hence $\ell_D = \abs{\operatorname{lk}(D)}$. 

Suppose $\operatorname{span}{D}\neq 0.$ Then $\Lambda(D)\neq \emptyset$. Let $S\in \Lambda(D)$, and $n$ and $m$ be number of positive and negative crossings, respectively, of $D$ that belong to $S$. Then $n+m=\operatorname{span}(D)$ and $\operatorname{lk}(D_S)=\operatorname{lk}(D)-(n-m)/2.$ 

Since $\operatorname{span}(D) \neq 0$,  we have either $r_{+} + \ell_{-} > r_{-} + \ell_{+}$ or $r_{+} + \ell_{-} < r_{-} + \ell_{+}$.  It is easy to observe that when $r_{+} + \ell_{-} > r_{-} + \ell_{+}$, the $n$ positive  crossings are over linking crossings and $m$ negative crossings are under linking crossings, whereas in  the case $r_{+} + \ell_{-} < r_{-} + \ell_{+}$, the $n$ positive crossings are under linking crossings and $m$ negative crossings are over linking crossings.

\smallskip 

\underline{\it Case 1.} 
Assume $r_{+} + \ell_{-} > r_{-} + \ell_{+}$. Then we have the following two subcases.  

\smallskip 

\underline{\it Subcase 1.1.} Assume $\ell_+> \ell_-$. Then $2\operatorname{lk}(D)> \operatorname{span}(D)$ and $r_{+}> \operatorname{span}(D)$. 
 
 Since $r_{+}> \operatorname{span}(D)$, we can ensure that there exist an $S\in \Lambda(D)$ such that the number of positive over linking crossings of $D$ in $S$ is $\operatorname{span}(D)$, i.e.,  $n=\operatorname{span}(D)$ and $m=0$. Therefore, 
\[\operatorname{lk}(D_S)= \operatorname{lk}(D)-\dfrac{1}{2}(n-m)=\operatorname{lk}(D)-\dfrac{1}{2}\operatorname{span}(D)>0.\] 

Hence  $\abs{\operatorname{lk}(D_S)}=\abs{\operatorname{lk}(D)-\dfrac{1}{2}\operatorname{span}(D)}$, which can be written as 
\begin{equation}\label{EQ1} \abs{\operatorname{lk}(D_S)} = \operatorname{lk}(D) - \dfrac{1}{2}\operatorname{span}(D), 
\end{equation}
since  $\operatorname{lk}(D)>0$. 
To be specific, for every set $S\in \Lambda(D)$ we have $\abs{\operatorname{lk}(D_S)}>0$. Indeed, if there exists a set $S'\in \Lambda(D)$ such that $\abs{\operatorname{lk}(D_{S'})}=0$, then $n'+m'=\operatorname{span}(D)$, where   $n'$ and $m'$ are the number of positive and negative crossings of $D$ that belongs to $S'$, and
$$
\operatorname{lk}(D_{S'})=\operatorname{lk}(D)-\dfrac{1}{2}(n'-m')=0 
$$
implies   
$$
2\operatorname{lk}(D)=n'-m'\leq n'+m',
$$ 
whence 
$$
2\operatorname{lk}(D)\leq \operatorname{span}(D),
$$ 
that gives a contradiction. Therefore for every set $S\in \Lambda(D)$ we have $\abs{\operatorname{lk}(D_S)}>0$ and hence $\ell_D \neq 0$.  

Now we need to show that $\abs{\operatorname{lk}(D)}-\operatorname{span}(D)/2 \leq \ell_D$. For this let us assume that there exist a set $S''\in \Lambda(D)$ such that  $\abs{\operatorname{lk}(D_{S''})}<\abs{\operatorname{lk}(D)}-\operatorname{span}(D)/2$. Then $n''+m''=\operatorname{span}(D)$, where $n''$ and $m''$ are the number of positive and negative crossings of $D$ that belongs to $S''$, and 
$$
\operatorname{lk}(D_{S''})=\operatorname{lk}(D) - \frac{(n''-m'')}{2}. 
$$ 
Therefore,
$$
\abs{\operatorname{lk}(D)- \dfrac{1}{2}(n''-m'')}<\abs{\operatorname{lk}(D)}-\dfrac{1}{2}(n''+m'').
$$
Since $\operatorname{lk}(D)>0$, we can write 
$$
\abs{\operatorname{lk}(D)- \dfrac{1}{2}(n''-m'')}<\operatorname{lk}(D)-\dfrac{1}{2}(n''+m''),
$$  
that implies 
$$
\operatorname{lk}(D)-\dfrac{1}{2}(n''-m'')< \operatorname{lk}(D)-\dfrac{1}{2}(n''+m''), 
$$
whence 
$$
(n''-m'')>(n''+m''),
$$
 which is not possible as both $n''$ and $m''$ are not equal to zero simultaneously. 
This contradiction implies
\begin{equation}\label{EQ2} 
\abs{\operatorname{lk}(D)} - \dfrac{1}{2}\operatorname{span}(D)\leq \abs{\operatorname{lk}(D_S)} 
\end{equation}
for any $S \in \Lambda(D)$. Using eq.~(\ref{EQ1}) and eq.~(\ref{EQ2}), we can say that  $\abs{\operatorname{lk}(D)}-\operatorname{span}(D)/2= \ell_D$, where $\ell_{D} = \min \{  \operatorname{lk} (D_{S}) | S \in \Lambda (D) \}$.  

\smallskip 

\underline{\it Subcase 1.2.} Assume $\ell_{+}\leq\ell_{-}$. Then either $r_{+}< r_{-}$ or $r_{+}\geq r_{-}$.

\smallskip 

\underline{\it Subcase 1.2a.}
If $r_{+}< r_{-}$, then we have $\operatorname{lk}(D)<0$, $2\abs{\operatorname{lk}(D)}>\operatorname{span}(D)$ and $\ell_{-}> \operatorname{span}(D)$.

Since $\ell_{-}> \operatorname{span}(D)$, we can ensure that there exists a set $S\in \Lambda(D)$ such that the number of negative under linking crossings of $D$ in $S$ is $\operatorname{span}(D)$, i.e.,  $n=0$ and $m=\operatorname{span}(D)$. Therefore, 
\[\operatorname{lk}(D_S)=\operatorname{lk}(D)-\dfrac{1}{2}(n-m)=\operatorname{lk}(D)+\dfrac{1}{2}\operatorname{span}(D)<0,\]
 and
\begin{equation}\label{EQ3} \abs{\operatorname{lk}(D_S)}=-(\operatorname{lk}(D)+\dfrac{1}{2}\operatorname{span}(D))=\abs{\operatorname{lk}(D)}-\dfrac{1}{2}\operatorname{span}(D).
\end{equation}

Suppose there exist $S'\in \Lambda(D)$ such that $\abs{\operatorname{lk}(D_{S'})}=0$, then $n'+m'=\operatorname{span}(D)$ and 
\[\operatorname{lk}(D_{S'})=\operatorname{lk}(D)-\dfrac{1}{2}(n'-m')=0,\]
where $n'$ and $m'$ are the number of positive and negative crossings of $D$ that belongs to $S'$. Therefore, 
$$
\operatorname{lk}(D)=\dfrac{1}{2}(n'-m'), 
$$ 
hence
$$ 
-2\operatorname{lk}(D)=m'-n'\leq m'+n' 
$$
that implies 
$$
2\abs{\operatorname{lk}(D)}\leq \operatorname{span}(D),
$$ 
that gives a contradiction. Therefore $\abs{\operatorname{lk}(D_{S})}>0$ for every set $S\in \Lambda(D).$ 
 
Now we need to show that $\abs{\operatorname{lk}(D)}-\operatorname{span}(D)/2 \leq \ell_D$. For this let us assume that there exist a $S''\in \Lambda(D)$ such that  $\abs{\operatorname{lk}(D_{S''})}<\abs{\operatorname{lk}(D)}-\operatorname{span}(D)/2$. Then $n''+m''=\operatorname{span}(D)$, where $n''$ and $m''$ are the number of positive and negative crossings of $D$ that belongs to $S''$, and 
$$
\operatorname{lk}(D_{S''}) = \operatorname{lk} (D) - \frac{(n''-m'')}{2}.
$$ 
Therefore,
\[ \abs{\operatorname{lk}(D)- \dfrac{1}{2}(n''-m'')}<\abs{\operatorname{lk}(D)}-\dfrac{1}{2}(n''+m'').\]
Since $\operatorname{lk}(D)<0$, we have 
$$
\abs{\operatorname{lk}(D)- \dfrac{1}{2}(n''-m'')}<-\operatorname{lk}(D)-\dfrac{1}{2}(n''+m''),
$$  
that implies 
$$ 
-(-\operatorname{lk}(D)-\dfrac{1}{2}(n''+m''))<\operatorname{lk}(D)-\dfrac{1}{2}(n''-m''), 
$$
whence 
$$
(n''+m'')<(m''-n''),
$$
which is not possible as both $n''$ and $m''$ are not equal to zero simultaneously. 
This implies
\begin{equation}\label{EQ4}
 \abs{\operatorname{lk}(D)}-\dfrac{1}{2}\operatorname{span}(D)\leq \abs{\operatorname{lk}(D_S)} 
 \end{equation}
for any  $S \in \Lambda(D)$. Using eq.~(\ref{EQ3}) and eq.~(\ref{EQ4}), we can say that $\abs{\operatorname{lk}(D)}-\operatorname{span}(D)/2= \ell_D.$ 
 
\smallskip 

\underline{\it Subcase 1.2.b.} 
If $r_{+} \geq r_{-}$, then we have  $\ell_{-}\geq (\operatorname{span}(D)-2\operatorname{lk}(D))/2$ and  $r_{+}\geq (\operatorname{span}(D)+2 \operatorname{lk}(D))/2$. 

Since $\ell_{-}\geq (\operatorname{span}(D)-2\operatorname{lk}(D))/2$ and  $r_{+}\geq (\operatorname{span}(D)+2 \operatorname{lk}(D))/2$, we can ensure that there exists a set $S\in \Lambda(D)$ such that  $n=(\operatorname{span}(D)+2 \operatorname{lk}(D))/2$ and $m=(\operatorname{span}(D)-2 \operatorname{lk}(D))/2.$     Then
    \[\operatorname{lk}(D_S)=\operatorname{lk}(D)-\dfrac{1}{2}(n-m)=0.\]
Observe that whenever there exists a set $S$ in $\Lambda(D)$ such that $\operatorname{lk}(D_S)=0$, then $\ell_D=0.$ 

\smallskip 

\underline{\it Case 2.} If $r_{+} + \ell_{-} < r_{-} + \ell_{+}$, i.e., $\operatorname{span}(D)=-(r_{+} + \ell_{-} - r_{-} - \ell_{+})$, then also we have the following two subcases.

\underline{\it Subcase 2.1.} Assume $r_+> r_-$. Then $2\operatorname{lk}(D)> \operatorname{span}(D)$ and $\ell_{+}> \operatorname{span}(D)$. 
 
Since $\ell_{+}> \operatorname{span}(D)$, we can ensure that there exist a set $S\in \Lambda(D)$ such that the number of positive under linking crossings of $D$ in $S$ is $\operatorname{span}(D)$, i.e.,  $n=\operatorname{span}(D)$ and $m=0$. Therefore, 
\[\operatorname{lk}(D_S)= \operatorname{lk}(D)-\dfrac{1}{2}(n-m)=\operatorname{lk}(D)-\dfrac{1}{2}\operatorname{span}(D)>0.\] 
Hence  $\abs{\operatorname{lk}(D_S)}=\abs{\operatorname{lk}(D)-\dfrac{1}{2}\operatorname{span}(D)}$, which can be written as 
\begin{equation}\label{EQ5} 
\abs{\operatorname{lk}(D_S)} = \operatorname{lk}(D) - \dfrac{1}{2}\operatorname{span}(D), 
\end{equation}
since $\operatorname{lk}(D)>0$. To be specific, for every set $S\in \Lambda(D)$ we have $\abs{\operatorname{lk}(D_S)}>0$. Otherwise, if there exist a set $S'\in \Lambda(D)$ such that $\abs{\operatorname{lk}(D_{S'})}=0$, then $n'+m'=\operatorname{span}(D)$, where   $n'$ and $m'$ are the number of positive and negative crossings of $D$ that belongs to $S'$, and
$$
\operatorname{lk}(D_{S'})=\operatorname{lk}(D)-\dfrac{1}{2}(n'-m')=0, 
$$
that implies 
$$ 
2\operatorname{lk}(D)=n'-m'\leq n'+m' , 
$$
whence
$$ 
2\operatorname{lk}(D)\leq \operatorname{span}(D),
$$
that gives a contradiction. Therefore for every set $S\in \Lambda(D)$ we have $\abs{\operatorname{lk}(D_S)}>0,$ and hence $\ell_D \neq 0.$ 

Now we need to show that $\abs{\operatorname{lk}(D)}-\operatorname{span}(D)/2 \leq \ell_D$. For this let us assume that there exist a $S''\in \Lambda(D)$ such that  $\abs{\operatorname{lk}(D_{S''})}<\abs{\operatorname{lk}(D)}-\operatorname{span}(D)/2$. Then $n''+m''=\operatorname{span}(D)$, where $n''$ and $m''$ are the number of positive and negative crossings of $D$ that belongs to $S''$, and $\operatorname{lk}(D_{S''})=\operatorname{lk}(D)-(n''-m'')/2$. Therefore,
$$
\abs{\operatorname{lk}(D)- \dfrac{1}{2}(n''-m'')}<\abs{\operatorname{lk}(D)}-\dfrac{1}{2}(n''+m'').
$$
Since $\operatorname{lk}(D)>0$,  we can rewrite as 
$$
\abs{\operatorname{lk}(D)- \dfrac{1}{2}(n''-m'')}<\operatorname{lk}(D)-\dfrac{1}{2}(n''+m''),
$$  
that implies 
$$ 
\operatorname{lk}(D)-\dfrac{1}{2}(n''-m'')< \operatorname{lk}(D)-\dfrac{1}{2}(n''+m'')
$$ 
whence $(n''-m'')>(n''+m'')$, which is not possible as both $n''$ and $m''$ are not equal to zero simultaneously. 
This implies
\begin{equation}\label{EQ6} 
\abs{\operatorname{lk}(D)}-\dfrac{1}{2}\operatorname{span}(D)\leq \abs{\operatorname{lk}(D_S)}   
\end{equation}
for any $S \in \Lambda(D)$.  Using eq.~(\ref{EQ5}) and eq.~(\ref{EQ6}), we can say that  $\abs{\operatorname{lk}(D)}-\operatorname{span}(D)/2= \ell_D.$ 

\smallskip 

\underline {\it Subcase 2.2.} Assume $r_{+}\leq r_{-}$. Then either $\ell_{+}< \ell_{-}$ or $\ell_{+}\geq \ell_{-}$.

\smallskip 

\underline{\it Subcase 2.2.a.} 
If $\ell_{+}< \ell_{-}$, then we have $\operatorname{lk}(D)<0$, $2\abs{\operatorname{lk}(D)}>\operatorname{span}(D)$ and $r_{-}> \operatorname{span}(D)$. 

Since $r_{-}> \operatorname{span}(D)$, we can ensure that there exists a set $S\in \Lambda(D)$ such that the number of negative over linking crossings of $D$ in $S$ is $\operatorname{span}(D)$, i.e.,  $n=0$ and $m=\operatorname{span}(D)$. Therefore, 
\[\operatorname{lk}(D_S)=\operatorname{lk}(D)-\dfrac{1}{2}(n-m)=\operatorname{lk}(D)+\dfrac{1}{2}\operatorname{span}(D)<0,\]
 and
\begin{equation}\label{EQ7} \abs{\operatorname{lk}(D_S)}=-(\operatorname{lk}(D)+\dfrac{1}{2}\operatorname{span}(D))=\abs{\operatorname{lk}(D)}-\frac{1}{2}\operatorname{span}(D).\end{equation}

Suppose there exists $S'\in \Lambda(D)$ such that $\abs{\operatorname{lk}(D_{S'})}=0$, then $n'+m'=\operatorname{span}(D)$ and 
\[\operatorname{lk}(D_{S'})=\operatorname{lk}(D)-\dfrac{1}{2}(n'-m')=0,\]
where $n'$ and $m'$ are the number of positive and negative crossings of $D$ that belongs to $S'$. Therefore, 
$$
\operatorname{lk}(D)=\dfrac{1}{2}(n'-m'),  
$$
whence 
$$
-2\operatorname{lk}(D)=m'-n'\leq m'+n', 
$$ 
that implies 
$$
2\abs{\operatorname{lk}(D)}\leq \operatorname{span}(D),
$$
which gives a contradiction. Therefore $\abs{\operatorname{lk}(D_{S})}>0$ for every set $S\in \Lambda(D).$ 
 
 Now we need to show that $\abs{\operatorname{lk}(D)}-\operatorname{span}(D)/2 \leq \ell_D$. For this let us assume that there exist a set $S''\in \Lambda(D)$ such that  $\abs{\operatorname{lk}(D_{S''})}<\abs{\operatorname{lk}(D)}-\operatorname{span}(D)/2$. Then $n''+m''=\operatorname{span}(D)$, where $n''$ and $m''$ are the number of positive and negative crossings of $D$ that belongs to $S''$, and $\operatorname{lk}(D_{S''})=lk(D)-(n''-m'')/2$. Therefore,
\[ \abs{\operatorname{lk}(D)- \dfrac{1}{2}(n''-m'')}<\abs{\operatorname{lk}(D)}-\dfrac{1}{2}(n''+m'').\]
Since $\operatorname{lk}(D)<0$, we can rewrite 
$$
\abs{\operatorname{lk}(D)- \dfrac{1}{2}(n''-m'')}<-\operatorname{lk}(D)-\dfrac{1}{2}(n''+m''),
$$
whence   
$$ 
-(-\operatorname{lk}(D)-\dfrac{1}{2}(n''+m''))<\operatorname{lk}(D)-\dfrac{1}{2}(n''-m''), 
$$
that implies 
$$
(n''+m'')<(m''-n''),
$$
which is not possible as both $n''$ and $m''$ are not equal to zero simultaneously. 
This implies
\begin{equation}\label{EQ8}
\abs{\operatorname{lk}(D)}-\dfrac{1}{2}\operatorname{span}(D)\leq \abs{\operatorname{lk}(D_S)}  
\end{equation}
for any $S \in \Lambda(D)$. Using eq.~(\ref{EQ7}) and eq.~(\ref{EQ8}), we can say that $\abs{\operatorname{lk}(D)}-\operatorname{span}(D)/2= \ell_D.$ 
 
 \smallskip 

 \underline{\it Subcase 2.2b.}
 If $\ell_{+} \geq \ell_{-}$, then we have  $r_{-}\geq (\operatorname{span}(D)-2\operatorname{lk}(D))/2$ and  $\ell_{+}\geq (\operatorname{span}(D)+2 \operatorname{lk}(D))/2$. 

Since $r_{-}\geq (\operatorname{span}(D)-2\operatorname{lk}(D))/2$ and  $\ell_{+}\geq (\operatorname{span}(D)+2 \operatorname{lk}(D))/2$, we can ensure that there exists a set  $S\in \Lambda(D)$ such that  $n=(\operatorname{span}(D)+2 \operatorname{lk}(D))/2$ and $m=(\operatorname{span}(D)-2 \operatorname{lk}(D))/2$, then
    \[\operatorname{lk}(D_S)=\operatorname{lk}(D)-\dfrac{1}{2}(n-m)=0.\]
Observe that whenever there exist a set $S$ in $\Lambda(D)$ such that $\operatorname{lk}(D_S)=0$, then $\ell_D=0.$


Thus, in all considered cases $\ell_D=0$ or $\ell_D=\abs{\operatorname{lk}(D)}-\operatorname{span}(D)/2$ and theorem is proved. 
\end{proof}

\begin{fact} \label{fact1} 
{\rm By changing one crossing in $D=D_{1} \cup D_{2}$, the value $\ell_D$ either changes by $\pm 1$ or remains same.}
\end{fact}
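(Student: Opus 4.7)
The plan is to reduce the statement to a computation about how $\lvert\operatorname{lk}(D)\rvert$ and $\operatorname{span}(D)$ behave under a single crossing change, and then feed the result through Theorem~\ref{thm-span}. First I would repackage that theorem as the single formula
\[
\ell_D \;=\; \max\!\bigl(0,\; \lvert\operatorname{lk}(D)\rvert - \tfrac{1}{2}\operatorname{span}(D)\bigr),
\]
which is visible from the proof: the branches that yield $\ell_D = 0$ (Subcases 1.2.b and 2.2.b) are precisely those where $\operatorname{span}(D) \geq 2\lvert\operatorname{lk}(D)\rvert$, while in all other subcases the explicit value is $\lvert\operatorname{lk}(D)\rvert - \tfrac{1}{2}\operatorname{span}(D) > 0$.

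Next I would split into two cases based on the changed crossing $c$. If $c$ is a self-crossing of $D_1$ or $D_2$, then $c$ contributes neither to $\operatorname{lk}$ nor to the counts $r_\pm, \ell_\pm$ appearing in $\operatorname{span}$; furthermore the admissible virtualizing family $\Lambda(D)$ consists of subsets of linking crossings only, so $\Lambda(D) = \Lambda(D')$ and $\operatorname{lk}(D_S) = \operatorname{lk}(D'_S)$ for each $S$ in this common family. Hence $\ell_D = \ell_{D'}$. If instead $c$ is a linking crossing, then (as noted in the paper) the crossing change preserves $\operatorname{span}$, while a direct check on the definitions shows that swapping the over/under role at $c$ migrates it between the pairs $(r_+,\ell_-)$ or $(r_-,\ell_+)$, leaving $r_+ - r_- - \ell_+ + \ell_-$ unchanged and shifting $r_+ + \ell_+ - r_- - \ell_- = 2\operatorname{lk}(D)$ by exactly $\pm 2$. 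Consequently $\operatorname{lk}(D)$ changes by $\pm 1$, so $\lvert\operatorname{lk}(D)\rvert$ changes by an element of $\{-1,0,+1\}$.

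Combining these, the quantity $f(D) := \lvert\operatorname{lk}(D)\rvert - \tfrac{1}{2}\operatorname{span}(D)$ changes by at most $1$ in absolute value. A brief parity check (computing $2\operatorname{lk}(D) + \operatorname{span}(D) \in \{2(r_+ - r_-),\, 2(\ell_+ - \ell_-)\}$) confirms that $f(D)$ is always an integer, so applying $\max(0,\cdot)$ to the two values $f(D)$ and $f(D')$ can only bring them closer together, never further apart. This yields $\lvert \ell_{D'} - \ell_D \rvert \leq 1$ and finishes the proof.

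The main bookkeeping hurdle is the linking crossing case: one must remember that a crossing change simultaneously reverses the sign of $c$ and swaps its over/under role relative to the traversed component, so tracking the correct migration $r_+ \!\leftrightarrow\! \ell_-$ and $r_- \!\leftrightarrow\! \ell_+$ is the step where one is most tempted to slip. Once that is done carefully, everything else is a direct application of the strengthened Theorem~\ref{thm-span}.
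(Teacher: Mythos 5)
Your proof is correct; note that the paper states this Fact with no proof at all, so there is no official argument to measure against, and what you give is a complete justification. Your key move --- upgrading Theorem~\ref{thm-span} to the closed formula $\ell_D=\max\bigl(0,\abs{\operatorname{lk}(D)}-\tfrac12\operatorname{span}(D)\bigr)$ --- is legitimate but genuinely needed: the statement of the theorem alone (``either $0$ or $\abs{\operatorname{lk}(D)}-\tfrac12\operatorname{span}(D)$'') would not suffice, since one must know which branch occurs after the crossing change; as you observe, the proof of the theorem does show the zero branch happens exactly when $\operatorname{span}(D)\geq 2\abs{\operatorname{lk}(D)}$, so the $\max$ formula is available. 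Your bookkeeping in the linking-crossing case (migration $r_+\leftrightarrow\ell_-$ and $r_-\leftrightarrow\ell_+$, hence $\operatorname{span}$ fixed and $\operatorname{lk}$ shifted by exactly $\pm1$), the integrality of $\abs{\operatorname{lk}(D)}-\tfrac12\operatorname{span}(D)$, and the $1$-Lipschitz property of $\max(0,\cdot)$ then give the claim, including that the change is an integer and not, say, $\pm\tfrac12$. Two small remarks. First, your assertion in the self-crossing case that every $S\in\Lambda(D)$ consists of linking crossings only deserves its one-line reason (virtualizing a self-crossing leaves the span unchanged, so by Lemma~\ref{lemma1.1} a set of size $\operatorname{span}(D)$ that kills the span can contain no self-crossings); with your $\max$-formula route this detour is in fact unnecessary, since $\operatorname{lk}$ and $\operatorname{span}$ are untouched by a self-crossing change. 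Second, there is a more elementary route that bypasses Theorem~\ref{thm-span} entirely, and is presumably the ``obvious'' argument the authors intended: for any $S$, the diagram $D'_S$ either equals $D_S$ (if the changed crossing lies in $S$) or is obtained from $D_S$ by one crossing change, so $\operatorname{span}(D'_S)=\operatorname{span}(D_S)$ and $\Lambda(D)=\Lambda(D')$, while $\abs{\operatorname{lk}(D'_S)}$ differs from $\abs{\operatorname{lk}(D_S)}$ by at most $1$ and each $\operatorname{lk}(D_S)$ is an integer; taking the minimum over the common family $\Lambda(D)$ gives the Fact directly. Your version buys a clean closed formula for $\ell_D$; the direct version avoids leaning on the internals of the theorem's proof.
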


\begin{proposition}\label{p2}
The following properties hold: \\ 
(1) If $D=D_{1}\cup D_{2}$ is virtual link diagram with all linking crossings of same sign, then
$$
\ell_{D} = \abs{ \operatorname{lk}(D)} - \frac{1}{2} \operatorname{span}(D).
$$
(2) $\ell_{D}$ is an invariant for 2-component virtual links $L$. \\ 
(3) For a n-component virtual link $L = L_{1} \cup L_{2} \cup \ldots \cup L_{n}$ the value $\sum_{i \neq j} \ell_{L_{i} \cup L_{j}}$ is an invariant.
\end{proposition}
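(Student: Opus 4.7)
The plan is to deduce all three statements from the closed formula
\[
\ell_{D} \;=\; \max\!\left\{\, 0,\; |\operatorname{lk}(D)| - \tfrac{1}{2}\operatorname{span}(D)\,\right\},
\]
valid for every $2$-component virtual link diagram $D$, which I would establish first by sharpening Theorem~\ref{thm-span}. That theorem already exhibits $\ell_D$ as one of the two values $0$ or $|\operatorname{lk}(D)|-\tfrac12\operatorname{span}(D)$; to pin down which, I would revisit its subcase analysis and observe that subcases~1.1, 1.2a, 2.1, 2.2a each force $2|\operatorname{lk}(D)|>\operatorname{span}(D)$, while subcases~1.2b and 2.2b each force $2|\operatorname{lk}(D)|\leq\operatorname{span}(D)$. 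Both checks are routine arithmetic using $\operatorname{span}(D)=|r_+-r_--\ell_++\ell_-|$ and $2\operatorname{lk}(D)=r_+-r_-+\ell_+-\ell_-$; combined with the dichotomy of Theorem~\ref{thm-span}, they yield the $\max$ formula.

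For part~(1), assume without loss of generality that every linking crossing is positive, so $r_-=\ell_-=0$. Then $\operatorname{span}(D)=|r_+-\ell_+|$ and $2\operatorname{lk}(D)=r_++\ell_+\geq 0$, whence $2|\operatorname{lk}(D)|\geq\operatorname{span}(D)$ in every case. Hence the second argument of the maximum is nonnegative and attains the maximum, yielding $\ell_D=|\operatorname{lk}(D)|-\tfrac12\operatorname{span}(D)$. Equivalently, one may appeal directly to Theorem~\ref{thm-span}: the sign hypothesis places $D$ in subcase~1.1 when $r_+>\ell_+$, subcase~2.1 when $r_+<\ell_+$, or the trivial $\operatorname{span}(D)=0$ case when $r_+=\ell_+$, each of which produces the same formula.

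Part~(2) follows at once from the $\max$ formula: the linking number and the span are both virtual link invariants, the latter by~\cite{cheng2013polynomial} as recalled in Section~\ref{section1}, so the right-hand side, and hence $\ell_D$, depends only on the equivalence class of the $2$-component link $L=D_1\cup D_2$. For part~(3), each $2$-component sublink $L_i\cup L_j$ of $L$ is well defined up to virtual link equivalence, since Reidemeister moves on $L$ restrict to (sequences of) Reidemeister moves on $L_i\cup L_j$ after the other components are forgotten. Hence by~(2) each $\ell_{L_i\cup L_j}$ is an invariant of $L$, and a finite sum of invariants is an invariant. The main obstacle is the initial sharpening of Theorem~\ref{thm-span}: the six subcase hypotheses must be collapsed into the single clean dichotomy $2|\operatorname{lk}(D)|\leq\operatorname{span}(D)$ versus $2|\operatorname{lk}(D)|>\operatorname{span}(D)$. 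Once that bookkeeping is in place, parts~(1)--(3) all drop out in a few lines.
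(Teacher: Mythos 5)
Your proposal is correct and follows essentially the same route as the paper: the paper's proof of (1) likewise combines the observation $2\abs{\operatorname{lk}(D)}\geq\operatorname{span}(D)$ with the case analysis of Theorem~\ref{thm-span}, its proof of (2) uses exactly the dichotomy $2\abs{\operatorname{lk}}\geq\operatorname{span}$ versus $2\abs{\operatorname{lk}}<\operatorname{span}$ together with the invariance of linking number and span, and (3) is proved by summing invariants. Packaging the dichotomy as the closed formula $\ell_D=\max\{0,\abs{\operatorname{lk}(D)}-\tfrac12\operatorname{span}(D)\}$ is only a cosmetic reformulation, and your arithmetic verifications of the subcases are consistent with the paper's.
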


\begin{proof}
(1) Let $D=D_{1}\cup D_{2}$ be a virtual link diagram with all linking crossings of same sign. Then $2 \abs{\operatorname{lk}(D)} \geq \operatorname{span}(D)$ and using Theorem~\ref{thm-span}, we have
$$
\ell_{D}=\abs{ \operatorname{lk}(D)} - \frac{1}{2} \operatorname{span}(D).
$$

(2) Let $D$ and $D'$ be two diagrams of $L=K_1\cup K_2$. Since linking number and span are invariants, either
$2\abs{\operatorname{lk}(D)} \geq \operatorname{span}(D)$ and $2\abs{\operatorname{lk}(D')} \geq \operatorname{span}(D')$ or $2\abs{\operatorname{lk}(D)} < \operatorname{span}(D)$ and $2\abs{\operatorname{lk}(D')} < \operatorname{span}(D')$.

Using the proof of Theorem~\ref{thm-span}, we have either $\ell_{D} = \ell_{D'} = 0$ or $\ell_{D} = \abs{ \operatorname{lk}(D)}-\frac{1}{2} \operatorname{span}(D)$ and $\ell_{D'}=\abs{ \operatorname{lk}(D')} - \frac{1}{2} \operatorname{span}(D')$. Hence $\ell_D = \ell_{D'}$.

(3) Since each $\ell_{L_{i}\cup L_{j}}$, $i\neq j$, is a positive quantity and an invariant, $\sum_{i\neq j} \ell_{L_{i}\cup L_{j}}$ is an invariant for~$L$.
\end{proof}

\begin{theorem} \label{thm-bound}
If $D=D_{1}\cup D_{2}\cup \ldots \cup D_{n}$ is a diagram of a virtual link $L$, then
$$
\Big(\operatorname{span}(D), \sum_{i \neq j} \ell_{D_{i} \cup D_{j}} + \frac{1}{2} \sum_{i=1}^{n} \sum_{k \in \mathbb{Z}\setminus \{0\}} |  J_k(D_{i}) | \Big) \leq U(L).
$$ 
\end{theorem}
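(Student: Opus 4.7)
The plan is to compare $U(L)$ with the claimed pair in the dictionary order. I will suppose $U(L)=(m,n)$, realized by a subset $S\subseteq C(D)$ of $|S|=m$ classical crossings that are virtualized, followed by crossing changes at $n$ classical crossings of $D':=D_S$ producing a trivial link diagram $D^\ast$. Corollary~\ref{c1} immediately gives $m\geq \operatorname{span}(D)$, which handles the first coordinate, so for the nontrivial case I assume $m=\operatorname{span}(D)$ and focus on bounding $n$ from below.

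The first step is to establish a rigid structure for $S$. Since $D^\ast$ is trivial and crossing change preserves $\operatorname{span}$, we have $\operatorname{span}(D_S)=0$, i.e.\ $S\in\Lambda(D)$. I partition $S$ by source: for $i<j$ let $S_{ij}$ be the virtualized crossings lying between $D_i$ and $D_j$, and let $S_{ii}$ be the virtualized self-crossings of $D_i$. Because $(D_i\cup D_j)_{S_{ij}}$ still has zero span, Lemma~\ref{lem:span} applied to the pair yields $|S_{ij}|\geq \operatorname{span}(D_i\cup D_j)$. Summing, together with the additivity $\operatorname{span}(D)=\sum_{i<j}\operatorname{span}(D_i\cup D_j)$ inherited from the definition of span and the extremal hypothesis $|S|=\operatorname{span}(D)$, forces the equalities $|S_{ij}|=\operatorname{span}(D_i\cup D_j)$ and $|S_{ii}|=0$ for every $i,j$. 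Hence each $S_{ij}\in\Lambda(D_i\cup D_j)$ and no self-crossing of any component is virtualized along the way.

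The second step classifies the $n$ crossing changes as self-changes (inside a single $D'_i$) or cross-changes (between two components). A cross-change between $D'_i$ and $D'_j$ alters $\operatorname{lk}(D'_i\cup D'_j)$ by $\pm 1$ and does not affect the knot type of any single component, while a self-change inside $D'_i$ leaves all linking numbers unchanged. Since every linking number of $D^\ast$ vanishes, the number of $(i,j)$-cross-changes is at least $|\operatorname{lk}((D_i\cup D_j)_{S_{ij}})|\geq \ell_{D_i\cup D_j}$, where the last inequality uses the definition of $\ell_{D_i\cup D_j}$ together with $S_{ij}\in\Lambda(D_i\cup D_j)$. For the individual components, $J_k(D'_i)=J_k(D_i)$ for every $k$, because no self-crossing of $D_i$ has been virtualized and cross-crossings contribute no chords to the Gauss diagram of $D_i$ alone. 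Moreover $D'_i$ is reduced to an unknot purely through its self-changes, so Proposition~\ref{p1}(2) bounds the number of those self-changes below by $\frac{1}{2}\sum_{k\neq 0}|J_k(D_i)|$. Adding the per-pair and per-component estimates over all $i\neq j$ and all $i$ delivers the desired inequality on $n$.

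The hard part will be the rigidity step: everything downstream depends on pinning down that an extremal $S$ must decompose as a disjoint union of minimal subsets $S_{ij}\in\Lambda(D_i\cup D_j)$, with no self-crossing virtualized. Without this one could in principle trade virtualizations against crossing changes, for example by virtualizing extra self-crossings of $D_i$ to shrink its $J_k$'s, or by using more than $\operatorname{span}(D_i\cup D_j)$ virtualizations inside one pair to depress its linking number below $\ell_{D_i\cup D_j}$, and the clean $\ell$- and $J_k$-bounds would collapse. Thus the entire argument rests on the additivity identity supplied by Lemma~\ref{lem:span} and on the fact that virtualizing a crossing inside one pair $D_i\cup D_j$ cannot increase the span of any other pair.
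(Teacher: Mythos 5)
Your argument is correct and follows essentially the same route as the paper: Corollary~\ref{c1} handles the first coordinate, and the second coordinate is counted pairwise through $\ell$ and componentwise through Proposition~\ref{p1}(2); your rigidity step (that an extremal virtualizing set of size exactly $\operatorname{span}(D)$ splits into subsets $S_{ij}\in\Lambda(D_i\cup D_j)$ with no self-crossings virtualized, via Lemma~\ref{lem:span} and Lemma~\ref{lemma1.1}) makes explicit precisely what the paper's appeal to Fact~\ref{fact1} and Proposition~\ref{p2} leaves implicit. One small point of hygiene: $U(L)$ is a minimum over all diagrams of $L$, so the realizing operations need not live on the given $D$; run your argument on the realizing diagram and transfer the bound back using the invariance of $\operatorname{span}$, of $\ell$ (Proposition~\ref{p2}(2)), and of $J_k$.
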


\begin{proof} If  $U(L)=(m,n)$, then by using Corollary~\ref{c1}, we can see that $m \geq \operatorname{span}(D)$. 

Now consider a set $S\in  \Lambda(D)$ such that $\operatorname{lk}(D'_i \cup D'_j) = \ell_{D'_i \cup D'_j}$, where $D_S=D'_{1}\cup D'_{2} \cup \ldots \cup D'_{n}$ and $1\leq i, j \leq n$. A necessary condition for a virtual link $L=K_{1}\cup K_{2}\cup \ldots \cup K_{n}$ to be unlink is,  $\operatorname{lk}(K_{i}\cup K_{j})=0$ for each $i \neq j$ and each $K_i$ is trivial. Using the Fact~\ref{fact1}, Proposition~\ref{p2}(1) and Proposition~\ref{p1}, we have
$$ 
n \geq \sum_{i\neq j} \ell_{D_{i}\cup D_{j}} + \frac{1}{2} \sum_{i=1}^{n} \sum_{k \in \mathbb{Z} \setminus \{0\}} | J_k(D_{i}) | . 
$$
This completes the proof. 
\end{proof}

\begin{theorem} \label{thm-upb}
If $D$ is a diagram of a virtual link $L$ with $v$ number of virtual crossings, then 
$$U(L) \leq (v,d(D)).$$ 
\end{theorem}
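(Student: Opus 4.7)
The plan is to construct an explicit trivialization of $D$ consisting of $d(D)$ classical crossing changes together with $v$ virtualizations. The argument breaks into two stages.

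In the first stage I would fix a base-point sequence $a = \langle a_1, \dots, a_n\rangle$ on $D$ realizing $d(D_a) = d(D)$. By the very definition of the warping degree, one may perform $d(D)$ classical crossing changes in $D_a$ so that the resulting based diagram $\widetilde{D}_a$ contains no warping crossing point; in particular, each component $\widetilde{D}^i_{a_i}$ is descending from $a_i$, and every linking crossing between $\widetilde{D}^i_{a_i}$ and $\widetilde{D}^j_{a_j}$ with $i<j$ is an over-crossing of $\widetilde{D}^i_{a_i}$.

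The second stage is to reduce the resulting descending virtual diagram $\widetilde{D}$, which still carries $v$ virtual crossings, to a trivial link diagram using $v$ virtualizations. The classical case $v=0$ is the well-known inequality $u(D)\le d(D)$ recalled in Section~\ref{section1}: lifting each component to its own plane in the order prescribed by $a$ and performing $RI$ and $RII$ moves produces $n$ disjoint unknotted circles. When $v>0$, I would process the virtual crossings of $\widetilde{D}$ one at a time; for each virtual crossing $c_v$ the descending structure pinpoints a classical crossing whose virtualization creates an arc carrying only virtual crossings, which can then be rerouted by the detour move (a sequence of $VRI$, $VRII$, $VRIII$, and $SV$ moves), absorbing $c_v$. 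Repeating this $v$ times, once per virtual crossing, terminates in a diagram of $n$ disjoint circles.

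The main obstacle is making the matching in the second stage rigorous: one must check that the $v$ classical crossings chosen for virtualization are pairwise distinct and that each absorption step preserves enough of the descending structure for subsequent detours to go through. A careful induction on $v$, organised around the traversal order given by $a$ and tracked on the Gauss diagram of $\widetilde{D}$, handles this bookkeeping, and combining the two stages then yields $U(L)\leq (v,d(D))$.
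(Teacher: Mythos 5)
Your first stage is fine and matches the easy half of the argument: choosing a base point sequence $a$ with $d(D_a)=d(D)$ and changing the warping crossings uses exactly $d(D)$ crossing changes. The genuine gap is in your second stage, which is where the whole content of Theorem~\ref{thm-upb} lies. You need the claim that a descending virtual diagram with $v$ virtual crossings becomes trivial after at most $v$ virtualizations, and the mechanism you propose for this --- that for each virtual crossing ``the descending structure pinpoints a classical crossing whose virtualization creates an arc carrying only virtual crossings,'' which is then removed by a detour move --- is not justified and is false as a general geometric statement. Virtualizing one existing classical crossing does not in general produce an arc free of classical crossings (the arc between the two endpoints of the virtualized crossing may be interleaved with arbitrarily many classical crossings), there is no canonical matching of virtual crossings to classical crossings (indeed no injective matching exists at all when $v$ exceeds the number of classical crossings), and you give no argument that the descending structure survives each absorption step. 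Deferring all of this to ``a careful induction on $v$'' leaves the theorem unproved, because the induction step is exactly what is missing.

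The idea that closes this gap in the paper is a local trade, not a global matching: near each virtual crossing insert an RII pair of classical crossings (Fig.~\ref{upper bound on u(L)}), virtualize \emph{one of the two newly created} crossings, and cancel the resulting pair of virtual crossings by VRII/detour moves. The net effect is that one virtualization replaces a virtual crossing of $D$ by a classical crossing whose resolution you may choose freely (two choices, coming from the two kinds of RII moves). Choosing each resolution to be non-warping with respect to the base point sequence $a$ yields, after $v$ such trades, a classical diagram $D_1$ with $d(D_1)\leq d(D_a)=d(D)$, and the classical fact $u(D_1)\leq d(D_1)$ finishes the proof: $U(L)\leq U(D')\leq (v,d(D))$, where $D'$ is the auxiliary diagram with the RII pairs inserted. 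Note also that this argument virtualizes crossings of an auxiliary diagram $D'$ of $L$, not of $D$ itself, which is permitted since $U(L)$ is a minimum over all diagrams; your insistence on virtualizing only crossings already present in $\widetilde{D}$ makes the task strictly harder than what the theorem requires. If you replace your second stage by this RII-insertion trick (applied either before or after the crossing changes), your outline becomes a correct proof essentially identical to the paper's.
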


\begin{proof} 
Consider a virtual link diagram $D$ with $v$ number of virtual crossings. Let $D'$ be the diagram obtained from $D$ by inserting $v$ number of RII moves locally as shown in Fig.~\ref{upper bound on u(L)}(b). 
\begin{figure}[!ht]
\begin{center}
\includegraphics[scale=.5]{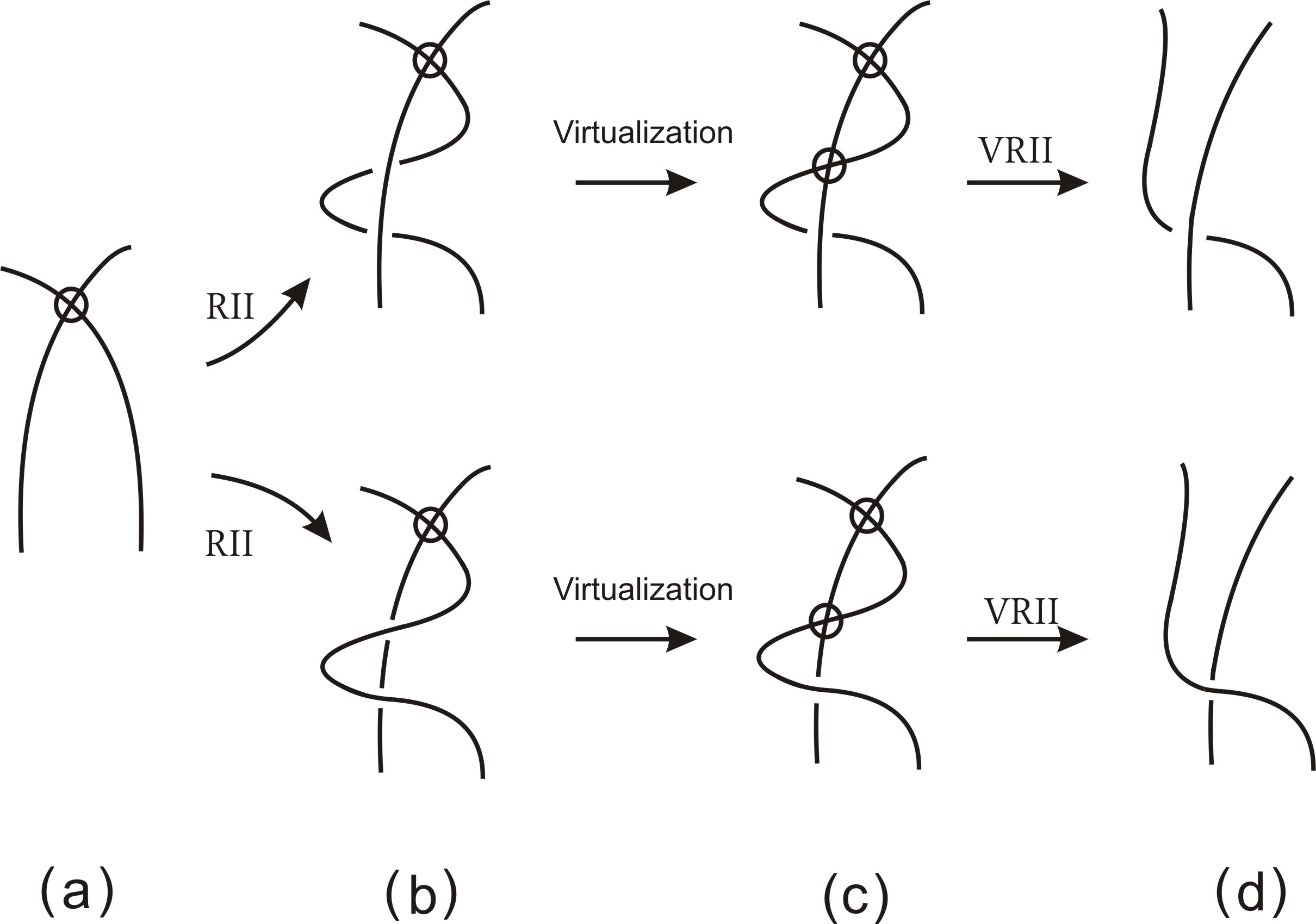}
\caption{Diagram transformations.}
\label{upper bound on u(L)}
\end{center}
\end{figure}
Obtain a diagram $D''$ from $D'$ by virtualizing $v$ number of crossings and applying VRII moves as illustrated in Fig.~\ref{upper bound on u(L)}(c) and Fig.~\ref{upper bound on u(L)}(d). Since there are two kinds of RII moves, one can obtain $2^{v}$ classical link diagrams from $D$ by replacing $v$ virtual crossings to classical crossings.  Out of these $2^{v}$  diagrams, there exists at least one diagram (say $D_{1}$) for which $d(D_{1})= d(D)$. We can ensure existence of such a diagram from the fact that: corresponding to any based point $a$ in $D$ we can resolve the virtual crossings to non-warping crossing points with respect to based point $a$. Further as $D_{1}$ is a classical diagram, we have $U(D_{1}) \leq (0,d(D))$. Hence $U(D)\leq (v,d(D))$ and $U(L)\leq (v,d(D))$. 
\end{proof}

\begin{corollary}
If $D$ is a diagram of one-component virtual link $L$ with $v$ virtual crossings and $n$ classical crossings, then 
$$
U(L) \leq \Big(v-1,\Big \lfloor \dfrac{n}{2}\Big \rfloor \Big).
$$
\end{corollary}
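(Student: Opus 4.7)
The plan is to sharpen Theorem~\ref{thm-upb} (which gives $U(L)\leq (v,d(D))$) in two ways for the one-component case: first I will bound $d(D)\leq\lfloor n/2\rfloor$ using the one-component hypothesis, and second I will save one virtualization by handling the last virtual crossing geometrically rather than through the RII/virtualize trick.

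For the warping-degree bound, I fix any base point $a$ on $D$ and observe that each classical crossing is visited twice during a traversal of $D$ from $a$, once as over and once as under, being warping precisely when the under visit comes first. Reversing the direction of traversal swaps the order of the two visits, so each classical crossing is warping in exactly one of the two directions; hence the numbers of warping crossings over the two directions from $a$ sum to $n$, and for at least one orientation choice we have $d(D_a)\leq \lfloor n/2\rfloor$. Fix such an $a$.

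To save one virtualization, I apply the construction of Theorem~\ref{thm-upb} to only $v-1$ of the $v$ virtual crossings. Pick one virtual crossing $c^{*}$ to retain; for each of the remaining $v-1$ virtual crossings, insert an RII locally, virtualize one of the two new classical crossings, then clean up with SV and VRII, choosing the orientation of each RII so that the resulting new classical crossing is non-warping with respect to $a$ (this is the same $2^{v-1}$-choice argument used in Theorem~\ref{thm-upb}). This consumes $v-1$ virtualizations and yields a diagram $D'$ with $n+v-1$ classical crossings and one virtual crossing $c^{*}$, satisfying $d(D'_{a})=d(D_{a})\leq\lfloor n/2\rfloor$. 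I then perform crossing changes at the $d(D'_{a})$ warping classical crossings of $D'_{a}$ to obtain an ascending one-component virtual knot diagram $D''$ with exactly one virtual crossing. Because $D''$ is one-component, the four arcs at $c^{*}$ must pair up adjacently on the outside, so I can slide $c^{*}$ across the classical portion of $D''$ using SV moves until it forms a virtual monogon, which VRI then eliminates; the residual ascending classical knot diagram is the trivial knot. Altogether, $L$ is $(v-1,d(D_{a}))$-unknottable, giving $U(L)\leq(v-1,\lfloor n/2\rfloor)$.

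The hard part will be the last step, namely verifying that the single residual virtual crossing on $D''$ can indeed be extricated to a virtual monogon using only SV, VRII, and VRI moves, without either introducing further virtualizations or destroying the ascending property of the remaining classical crossings. The one-virtual-crossing hypothesis is essential here: with two or more virtual crossings an ascending diagram need not be trivial (the virtual trefoil is a standard counterexample), but for a single virtual crossing on a one-component diagram, the forced adjacent pairing of its four arcs together with Kauffman's detour principle should guarantee that a monogon can always be produced.
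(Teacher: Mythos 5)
Your first two steps are sound and match the paper's skeleton: resolving $v-1$ of the virtual crossings at the cost of $v-1$ virtualizations, and the orientation-reversal count showing that the warping numbers for the two traversal directions from a fixed base point sum to $n$, so one direction gives at most $\lfloor n/2\rfloor$. The fatal problem is the last step. Your key claim --- that an ascending one-component diagram with a single residual virtual crossing can have that crossing slid out to a virtual monogon by detour-type moves, leaving a trivial ascending classical diagram --- is false, and the counterexample is the very knot you cite. The standard virtual trefoil diagram has two classical crossings and exactly \emph{one} virtual crossing (not two, as you suggest), and the paper records that its warping degree is $0$: it is already descending from a suitable base point. Running your construction on it from that base point performs zero crossing changes and leaves you with the virtual trefoil itself, which is nontrivial; hence no sequence of SV, VRII and VRI moves can extricate its single virtual crossing. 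The detour move only reroutes arcs whose interiors meet the diagram in virtual crossings alone, and the arcs emanating from $c^{*}$ generally run through classical crossings, so the ``adjacent pairing of the four arcs'' gives you no usable detour.

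The ingredient you are missing is precisely the one the paper leans on: the base point $p$ must be chosen \emph{in a neighborhood of the surviving virtual crossing}. It is the descending condition from such a $p$, not from an arbitrary warping-optimal base point, that forces triviality of the resulting diagram. Note that the numerical bound survives this constraint: with $p$ fixed adjacent to $c^{*}$, orientation reversal still shows that one of the two directions makes at most $\lfloor n/2\rfloor$ of the original $n$ crossings warping, and the $v-1$ new classical crossings produced by resolving the other virtual crossings can be chosen non-warping for that direction, so $d(D'_{p})\leq\lfloor n/2\rfloor$. As written, however, your base point is chosen independently of $c^{*}$, and your argument for removing $c^{*}$ afterwards cannot be completed.
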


\begin{proof} 
Let $D'$ be a diagram obtained from $D$ by resolving $v-1$ number of virtual crossings to classical crossings, as discussed in Theorem~\ref{thm-upb}. Choose a based point $p$ in the neighborhood of virtual crossing point of $D'$. By changing all the warping crossing points from under to over, while moving from $p$ along the orientation, the resulting diagram presents a diagram of trivial knot. Hence  
$
U(L)\leq U(D)\leq (v-1,d(D_{p})) \leq (v-1,\Big \lfloor \dfrac{n}{2}\Big \rfloor).
$ 
\end{proof}

Theorem~\ref{thm-upb} implies the following observation.   

\begin{corollary} \label{c3}
 If $D$ is a diagram of virtual link $L$ having $v$ virtual crossings, which is minimum over all the diagrams of $L$, then $
U(L)\leq U(D)\leq (v, d(D))$. 
\end{corollary}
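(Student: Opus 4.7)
The corollary is essentially a chaining of two short inequalities, one from the definition of $U(L)$ and one from (the proof of) Theorem~\ref{thm-upb}, so the plan is to verify each of them separately.

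The first inequality, $U(L)\leq U(D)$, is immediate from the definition of $U(L)$ as the minimum of $U(D')$ taken over all diagrams $D'$ of $L$: in particular $U(L)\leq U(D)$ for the specific diagram $D$ under consideration.

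For the second inequality, $U(D)\leq (v,d(D))$, the plan is to reuse verbatim the construction in the proof of Theorem~\ref{thm-upb}. Insert $v$ RII moves locally at each virtual crossing of $D$ to obtain a diagram $D'$; then, reversing the procedure of Fig.~\ref{upper bound on u(L)}, each virtual crossing of $D$ can be replaced by one of two classical crossings, yielding $2^v$ classical link diagrams. Among these, one can select $D_1$ with $d(D_1)=d(D)$ by choosing, with respect to a base point sequence realizing $d(D)$, the resolution that does not introduce a new warping crossing at any of the $v$ spots. Since $D_1$ is classical we have $U(D_1)\leq (0,d(D))$, and undoing the $v$ virtualizations converts this bound into $U(D)\leq (v,d(D))$, as desired.

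The hypothesis that $v$ is the minimum number of virtual crossings over all diagrams of $L$ plays no role in either inequality; it is included only to highlight that the bound is phrased in terms of the virtual crossing number of $L$ itself. There is therefore no substantive obstacle here, since the main work (constructing $D_1$ with $d(D_1)=d(D)$) has already been carried out in Theorem~\ref{thm-upb}.
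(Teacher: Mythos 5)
Your proposal is correct and matches the paper's reasoning: the paper presents this corollary as an immediate consequence of Theorem~\ref{thm-upb}, whose proof already establishes $U(D)\leq (v,d(D))$ via the $2^{v}$ classical resolutions, and the inequality $U(L)\leq U(D)$ is just the definition of $U(L)$ as a minimum over diagrams. Your remark that the minimality of $v$ plays no logical role is also accurate; it only serves to phrase the bound in terms of the virtual crossing number of $L$.
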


\section{Unknotting index for virtual pretzel links} \label{section3}

In this section we will provide unknotting index for a large class of virtual links obtained from pretzel links by virtualizing some of its classical crossings. For examples of computing unknotting numbers of certain virtual torus knots see~\cite{ishikawa}.

Let $D$ be the standard diagram of a pretzel link $L(p_{1},p_{2}, \ldots, p_{n})$ with labelling as illustrated in Fig.~\ref{fig8}.   For a reader convenience, whenever we virtualized some crossings in $L(p_{1},p_{2}, \ldots, p_{n})$, then labelling in the resulting diagram remains same. Also, we call the crossings at positions 
$$
\sum_{k=0}^{i-1} p_{k}+1, \quad \sum_{k=0}^{i-1} p_{k}+2, \quad \ldots, \quad \sum_{k=0}^{i-1} p_{k}+p_{i}
$$ 
as $i$--th strand crossings, where $p_{0}=0$.
 \begin{figure}[!ht]
{\centering
\includegraphics[scale=0.5]{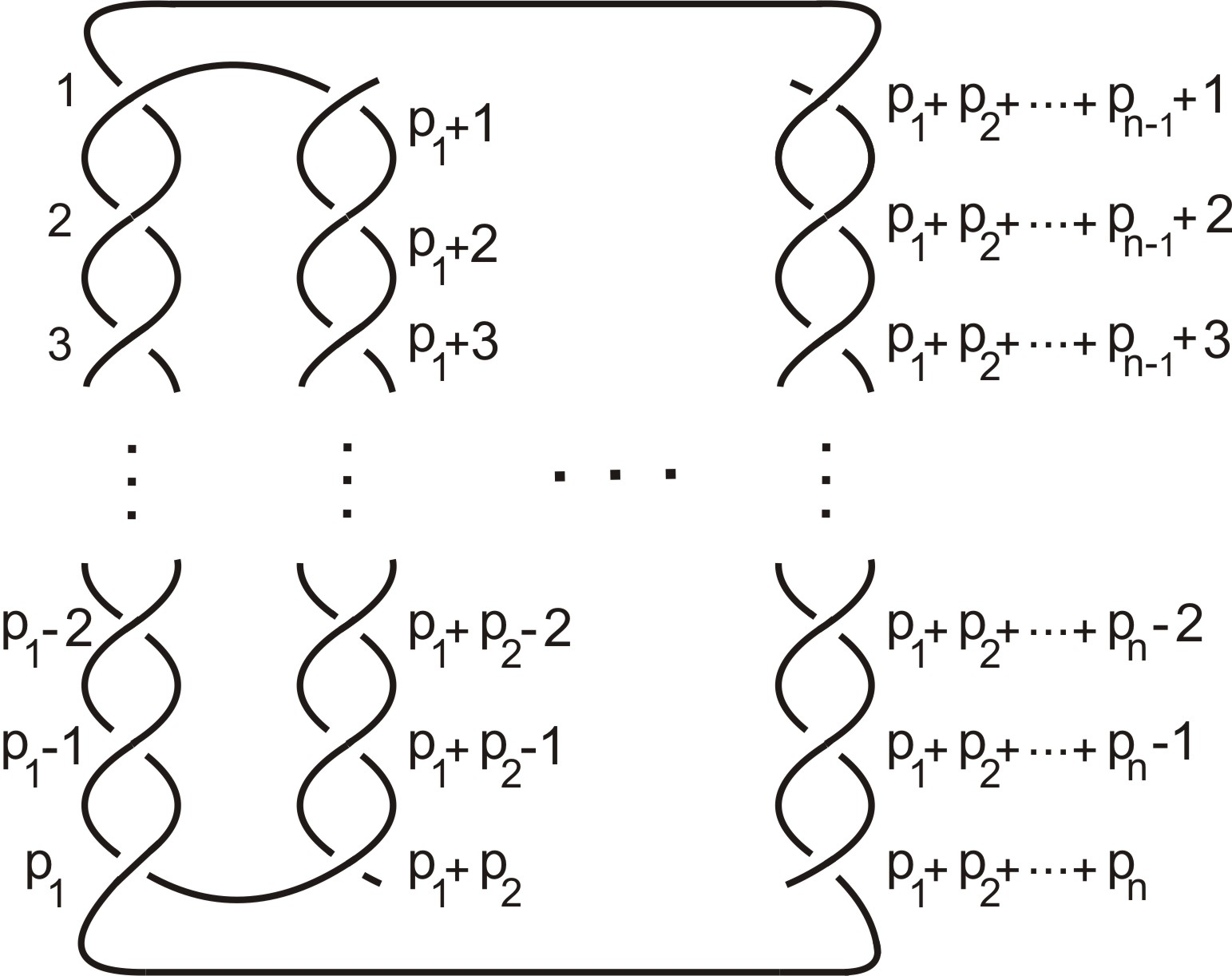}
\caption{Standard diagram of pretzel link $L(p_{1},p_{2},\ldots,p_{n})$.}
\label{fig8}
}
\end{figure}

\begin{theorem} \label{thm1-pretzel}
Let $D$ be a diagram of a virtual link $L$ obtained from labelled pretzel link $L'(p_{1},p_{2} \ldots, p_{n})$ by virtualizing $k$ crossings with $n$ even. If $k_{1}$ is the number of virtual crossings among $k$ having even labelling and each $p_{i}$ is odd, then
$$
U(L) = \begin{cases}
( | k-2k_{1} |, \displaystyle \frac{1}{2}  \sum_{i=1}^{n} p_{i}-k+k_{1} ), & \text{if}~~k>2k_{1}, \\
( | k-2k_{1} |, \displaystyle \frac{1}{2}  \sum_{i=1}^{n} p_{i}-k_{1} ),    & otherwise.
\end{cases}
$$
\end{theorem}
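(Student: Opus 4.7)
The plan is to establish the identity by proving matching lower and upper bounds on $U(L)$. The lower bound will come from Theorem~\ref{thm-bound} applied to the virtualized diagram $D$, and the upper bound will come from an explicit unknotting of $D$ together with Theorem~\ref{thm-upb}.

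First I would analyse the labelled standard diagram of $L'(p_1,\ldots,p_n)$. Because $n$ is even and every $p_i$ is odd, the link has exactly two components $D_1 \cup D_2$. Tracing the strands through the tangles shows that within each strand the crossings alternate between self-crossings (of a single component) and linking crossings (between the two components), and that in the standard diagram all linking crossings carry the same sign. After arranging the labelling so that this alternation is controlled by the parity of the global label, one obtains a clean bookkeeping of which of the $k$ virtualized crossings are self-crossings and which are linking crossings: the $k_1$ even-labelled ones are of one type, and the remaining $k-k_1$ odd-labelled ones are of the other.

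Second, I would compute the relevant invariants of $D$ via this bookkeeping. Virtualizing a self-crossing does not affect $\operatorname{span}$, while virtualizing a linking crossing changes it by $\pm 1$; summing these contributions over the virtualized crossings should yield $\operatorname{span}(D) = |k - 2k_1|$. The same bookkeeping produces $\operatorname{lk}(D)$ by subtracting the signed contributions of the virtualized linking crossings from the original linking number of $L'$. Since all linking crossings of $D$ still share the same sign, Proposition~\ref{p2}(1) applies and gives $\ell_{D_1 \cup D_2} = |\operatorname{lk}(D)| - \tfrac12 \operatorname{span}(D)$, which after a short calculation reduces to the second coordinate in the claimed formula; the dichotomy between the cases $k > 2k_1$ and $k \le 2k_1$ arises from which parity of virtualized crossing dominates. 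For the writhe terms I would compute the Gauss-diagram index of each remaining self-crossing chord of $D_i$ using the chord-intersection rule, show that these indices are all nonzero, and conclude that $\tfrac12 \sum_i \sum_{k \neq 0} |J_k(D_i)|$ equals the number of surviving self-crossings. Substituting these values into Theorem~\ref{thm-bound} produces the required lower bound in both cases.

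Third, for the upper bound I would exhibit an explicit $(|k-2k_1|, \cdot)$-unknotting of $D$: after changing each surviving classical self-crossing in $D$, each component becomes a descending strand, and the remaining same-sign linking crossings can then be cancelled in pairs by RII-reductions, with the requisite further virtualizations consuming exactly the $\operatorname{span}(D)$ budget. Counting the crossing changes used matches the second coordinate of the claimed $U(L)$, and an application of Theorem~\ref{thm-upb} to an intermediate diagram certifies admissibility. The main obstacle is the bookkeeping in the second step: one must verify that the Gauss-diagram index of every surviving self-crossing chord is nonzero (so that the $J_k$ bound is tight), and that the sign contributions of the virtualized linking crossings combine to exactly $|k-2k_1|$. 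The piecewise shape of the formula reflects whether the dominating parity of virtualized crossings agrees with the common sign of the linking crossings in the standard diagram, and making this dichotomy precise uniformly across all pretzel parameters $(p_1,\ldots,p_n)$ is the heart of the argument.
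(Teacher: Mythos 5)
Your overall strategy (lower bound from Theorem~\ref{thm-bound}, upper bound from an explicit unknotting) matches the paper's, but your structural analysis of the pretzel diagram is wrong, and everything downstream rests on it. In the standard diagram of $L'(p_{1},\ldots,p_{n})$ with $n$ even and every $p_{i}$ odd, the two strands passing through each twist region belong to \emph{different} components, so every classical crossing is a linking crossing; there are no self-crossings at all. (This is exactly why $\abs{\operatorname{lk}(D)}=\frac{1}{2}(\sum_{i}p_{i}-k)$: each of the $\sum_{i}p_{i}-k$ surviving crossings contributes $\pm\frac{1}{2}$, all with the same sign.) Hence the even/odd labelling does not separate self-crossings from linking crossings, as you assert; it separates over-linking from under-linking crossings with respect to a fixed traversal of one component, and that over/under dichotomy is what produces $\operatorname{span}(D)=\abs{k-2k_{1}}$, since the $k_{1}$ even-labelled virtualizations and the $k-k_{1}$ odd-labelled ones push the span in opposite directions. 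Under your bookkeeping (one parity $=$ self-crossings, the other $=$ linking crossings) you would instead obtain $\operatorname{span}(D)=k-k_{1}$ (only the virtualized ``linking'' crossings would move the span) and a linking number that does not fit, so the claimed formula cannot be recovered along the route you describe.

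The same misreading infects the remaining two ingredients. Because each component $D_{i}$ has no self-crossings, every $J_{k}(D_{i})$ vanishes and the writhe term in Theorem~\ref{thm-bound} contributes nothing; your plan to show that the index of each ``surviving self-crossing chord'' is nonzero and to add their count to the lower bound would, if such crossings existed, make your lower bound strictly exceed the theorem's stated value and contradict the upper bound. For the upper bound, the crossing changes cannot be spent on self-crossings: the correct procedure is first to virtualize $\abs{k-2k_{1}}$ further crossings to kill the span, and then to change half of the remaining \emph{linking} crossings (all those of one labelling parity), after which the Gauss diagram is that of a trivial $2$-component link; the number of changes is $\frac{1}{2}\bigl(\sum_{i}p_{i}-k-\abs{k-2k_{1}}\bigr)$, i.e.\ exactly the second coordinate. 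Note also that two linking crossings of the same sign can never be removed by an RII move (an RII pair always consists of crossings of opposite sign), so your claim that the surviving same-sign linking crossings ``cancel in pairs by RII-reductions'' without any crossing changes fails.
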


\begin{proof} 
Since each $p_i$ is odd and $n$ is even, $L'(p_{1},p_{2} \ldots, p_{n})$ represents a $2$-component link. Let $C_{1}$ and $C_{2}$ be the set of even and odd labellings, respectively. Let $D$ be the virtual link obtained from $L'(p_{1}, p_{2}, \ldots, p_{n})$ by virtualizing $k_{1}$ and $k_{2}$ crossings from the labelling set $C_{1}$ and $C_{2}$, respectively. If $D$ represents virtual link $L$ and $k=k_{1}+k_{2}$, then $\operatorname{span}(L)=\mid k-2k_{1}\mid$. Since all the linking crossings in $D$ are of same sign and $\abs{\operatorname{lk}(D)}= \frac{1}{2}(\sum p_i-k)$, by using Theorem~\ref{thm-bound} and Proposition~\ref{p2}(a), 
$$
U(L)\geq \Big(\abs{ k-2k_{1}},\frac{1}{2} \sum _{i=1}^{n}p_{i}-\frac{1}{2}(k+\abs{ k-2k_{1}})\Big).
$$ 
Since $\operatorname{span}(D)=\abs{k-2k_{1}}$, there exist a diagram $D'$ obtained from $D$ by virtualizing $\abs{ k-2k_{1}}$ crossings such that $\operatorname{span}(D')=0.$ Now diagram $D'$ has $\sum _{i=1}^{n}p_{i}-k-\abs{ k-2k_{1}}$  crossings, all of same sign and $\operatorname{span}(D')=0$. Therefore, number of under linking crossings and number of over linking crossings in $D'$ are equal. More precisely, number of crossings in $D'$ with the same parity of labelling is $\frac{1}{2} (\sum _{i=1}^{n}p_{i}-k-\abs{ k-2k_{1}})$.

By applying crossing change operation to all the crossings in $D'$ with odd labelling, the resulting diagram becomes trivial. Hence 
$$
U(L)\leq \Big(\abs{ k-2k_{1}}, \frac{1}{2} \sum_{i=1}^{n}p_{i}-\frac{1}{2}(k+\abs{k-2k_{1}})\Big).
$$ 
It is easy to observe through Gauss diagram corresponding to $D'$. 
\end{proof}

\begin{example} {\rm 
Let $L'$ be a virtual link diagram obtained from $L(7,5,9,11)$ pretzel link by virtualizing $13$ crossings as shown in Fig.~\ref{fig9a}. For this specific example,  $k=13$ and  $k_{1}=10$. Thus $\operatorname{span}(D)=7$ and using Theorem~\ref{thm1-pretzel}, we have $U(L)\geq (7,6)$. 
 \begin{figure}[!ht]
{\centering
\subfigure[$ L'= K_1 \cup K_2$]{ \includegraphics[scale=0.5]{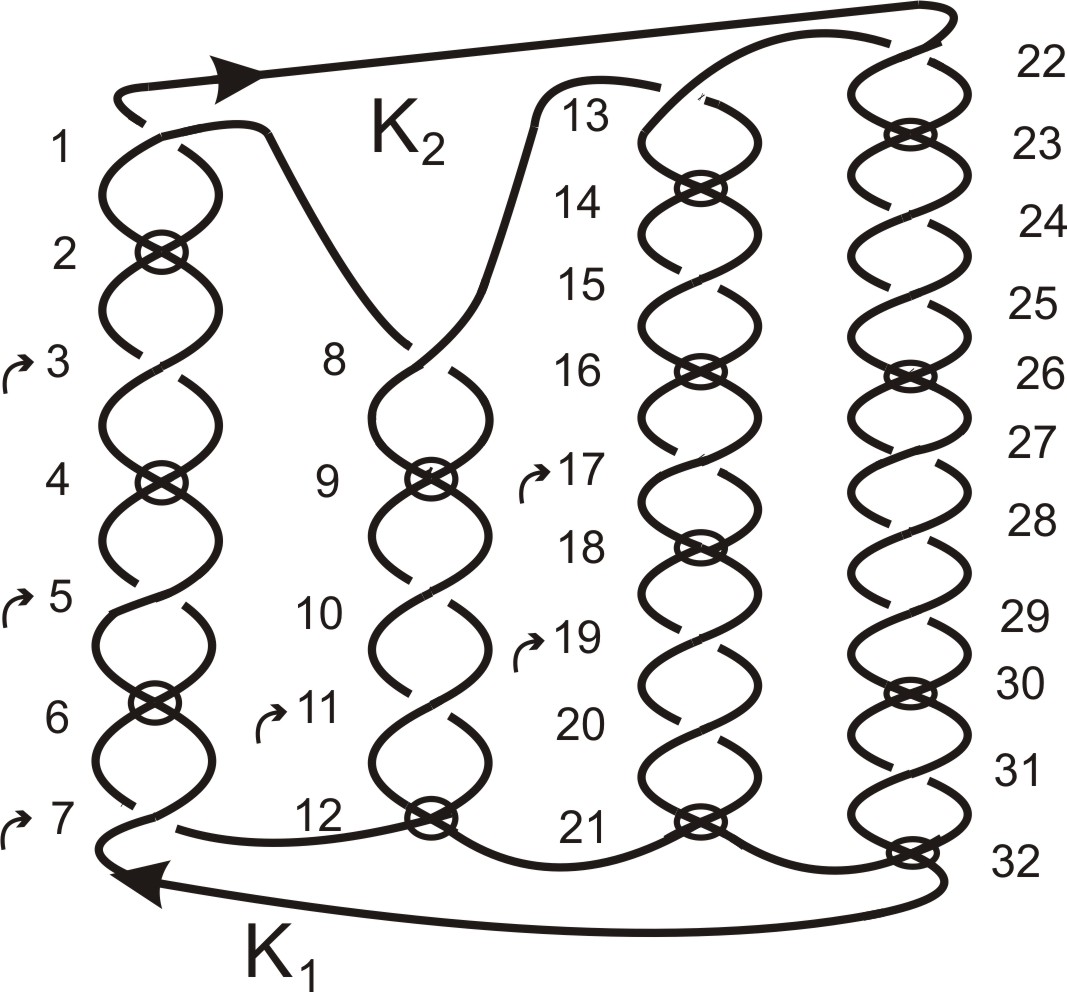} \label{fig9a} }
\hspace{.5cm}
\subfigure[$G=G(K_1)\cup G(K_2)$] { \includegraphics[scale=0.5]{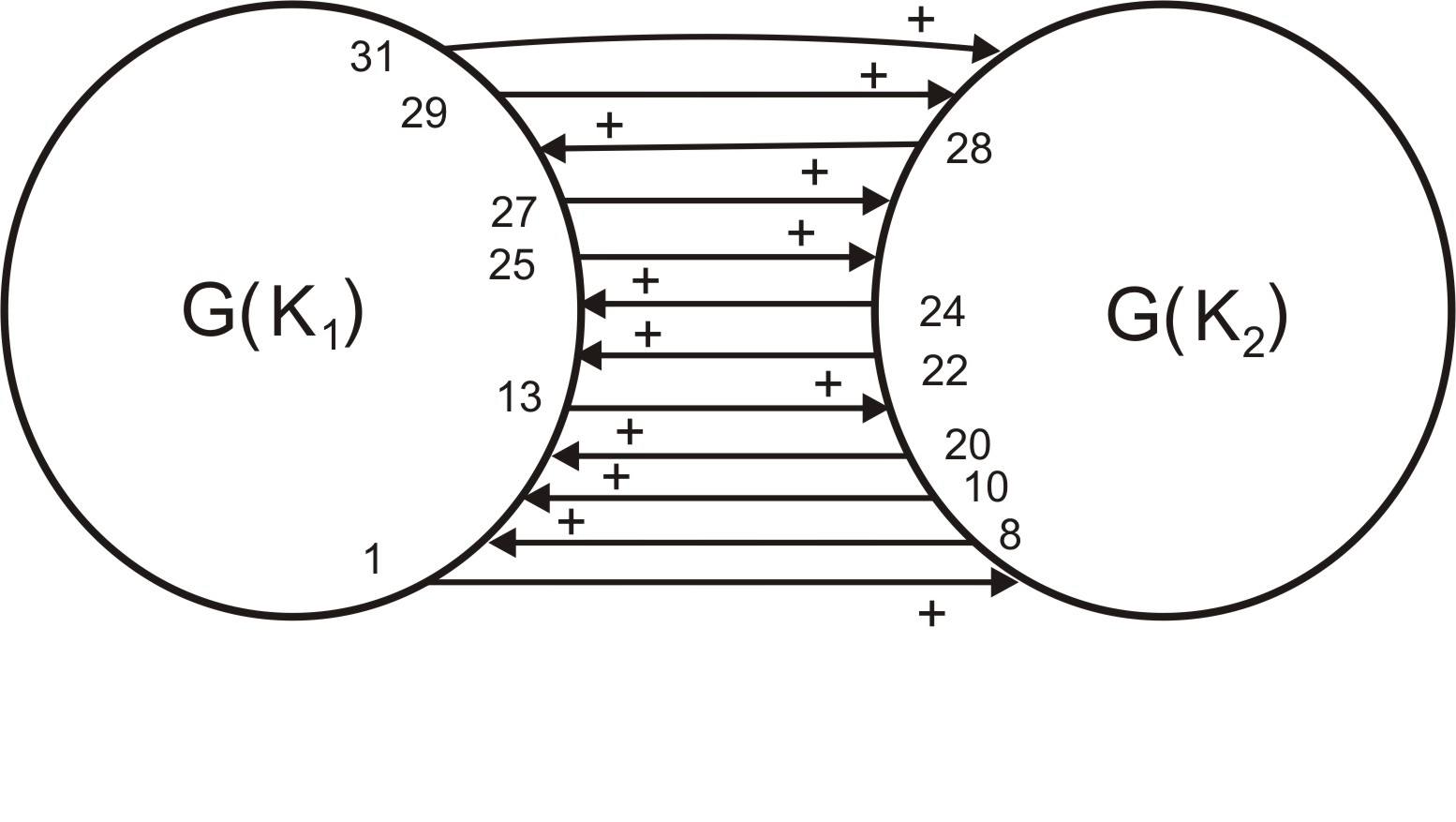} \label{fig9b} }
\caption{Virtualization of a pretzel link diagram and Gauss diagram.}
\label{31}
}
\end{figure}
By virtualizing crossings labelled with $3$, $5$, $7$, $11$, $17$ and $19$ integers in $D$, the resulting diagram, say $D'$, has zero span. Let $G$ be the Gauss diagram corresponding to $D'$ as shown in Fig.~\ref{fig9b}. Now if we apply crossing change operations on the crossings  $1$, $13$, $25$, $27$, $29$ and $31$ in $D'$, $G$ becomes a Gauss diagram of trivial link. Hence $U(L) \leq(7,6)$.
}
\end{example}

\begin{corollary} \label{U(L)=P(p1,p2)}
Let $D$ be a diagram of virtual link $L$ obtained from labelled pretzel link $L'(p_{1},p_{2})$ by virtualizing $k$ crossings. If $p_{1}+ p_{2}$ is even, then
$$
U(L)=
 \begin{cases}
 \left( |k-2k_{1}|, \displaystyle  \frac{p_{1}+p_{2}-2k+2k_{1}}{2} \right),             & \text{if}~~~k>2k_{1}, \\
 \left( |k-2k_{1}|, \displaystyle \frac{p_{1}+p_{2}-2k_{1}}{2} \right) ,              & otherwise,
\end{cases}
$$
where $k_{1}$  is the number of crossings virtualized that are labelled with even integers in $L'$.
\end{corollary}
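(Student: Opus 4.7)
The plan is to repeat the proof of Theorem~\ref{thm1-pretzel} essentially verbatim in the case $n=2$, noting that the only structural facts used there are (i) the ambient link is a $2$-component link, (ii) every classical crossing of the standard pretzel diagram is a linking crossing of the same sign, (iii) the over/under pattern alternates with the labelling, and (iv) each component of the diagram is a trivial knot. All four facts hold whenever $p_1+p_2$ is even, that is, whether both of $p_1,p_2$ are odd or both are even. Only the first sub-case is covered by Theorem~\ref{thm1-pretzel}, so the content of the corollary lies in checking that the same argument survives in the all-even sub-case.

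First I would read off the invariants of $D$ from its standard diagram. Since the $p_1+p_2$ linking crossings all carry the same sign (say positive) and alternate over/under along each component as the label increases, virtualizing $k_1$ even-labelled and $k-k_1$ odd-labelled crossings changes the over- and under-linking counts asymmetrically and gives
\[
\operatorname{span}(D)=|k-2k_1|, \qquad |\operatorname{lk}(D)|=\frac{p_1+p_2-k}{2}.
\]
All remaining linking crossings still share one sign, so Proposition~\ref{p2}(1) applies and
\[
\ell_D \;=\; |\operatorname{lk}(D)|-\tfrac12\operatorname{span}(D) \;=\; \begin{cases} (p_1+p_2-2k+2k_1)/2, & k>2k_1,\\ (p_1+p_2-2k_1)/2, & \text{otherwise.}\end{cases}
\]
Each component of $D$ is a trivial knot diagram (one arc threaded through the pretzel tangles with no self-crossings), so $J_n(D_i)=0$ for every $n$ and $i$, and Theorem~\ref{thm-bound} delivers the lower bound $U(L)\geq(|k-2k_1|,\ell_D)$.

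For the matching upper bound I would exhibit an explicit $(|k-2k_1|,\ell_D)$-unknotting of $D$. First, virtualize an additional $|k-2k_1|$ crossings chosen by the recipe in the proof of Lemma~\ref{lem:span}, i.e.\ virtualize either a positive over-crossing or a negative under-crossing whenever $r_++\ell_->r_-+\ell_+$, and the symmetric choice otherwise; this produces a diagram $D'$ with $\operatorname{span}(D')=0$. Because the linking crossings of $D'$ still share one sign and the span is zero, the over- and under-linking counts of $D'$ are equal and each equals $|\operatorname{lk}(D')|=\ell_D$. Applying a crossing change at each of the $\ell_D$ under-linking crossings then converts $D'$ into a split diagram of two trivial knots, namely the two-component unlink. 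Combining the two bounds gives $U(L)= (|k-2k_1|,\ell_D)$ as required.

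The only delicate point, and the one that separates the all-even case from the all-odd case already handled by Theorem~\ref{thm1-pretzel}, is the bookkeeping of signs and of which label parity corresponds to an over- versus under-linking crossing of the first component. Once this is pinned down in the all-even case (by orienting the two component curves consistently and checking one crossing), the argument runs identically in both sub-cases and no new idea beyond Theorem~\ref{thm1-pretzel} is required.
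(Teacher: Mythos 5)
Your route is genuinely different from the paper's, and the difference matters. The paper does not re-run the argument of Theorem~\ref{thm1-pretzel} on the all-even diagram: it observes that $L'(p_{1},p_{2})$ with $p_{1}+p_{2}$ even is a $(2,p_{1}+p_{2})$ torus link, so the same virtual link arises from a presentation $L'(p'_{1},p'_{2})$ with both parameters odd and with the $k$ virtualizations transferred ``with the same parity of labelling'', and then it quotes Theorem~\ref{thm1-pretzel} verbatim. You instead try to verify the hypotheses of that theorem's proof directly on the all-even diagram, and the step you defer to ``bookkeeping'' is precisely where this fails.

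The identity $\operatorname{span}(D)=|k-2k_{1}|$ needs the label parity to correspond \emph{globally} to which component is the over-strand. Within one twist region the over/under role of a fixed component alternates with the label, and the component that is over at the topmost crossing of a twist region is the one entering at a fixed corner of that region (the same corner for both regions, since they are drawn identically). Call $A$ the component through that corner of the first region; the component through the corresponding corner of the second region is the other component $B$ in both parity sub-cases. The topmost labels of the two regions are $1$ and $p_{1}+1$. When $p_{1}$ is odd these have opposite parities, which exactly compensates for the switch from $A$ to $B$, so ``odd label $\Leftrightarrow$ $A$ over'' holds globally --- that is why Theorem~\ref{thm1-pretzel} works. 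When $p_{1}$ is even, $1$ and $p_{1}+1$ are both odd and the compensation fails: the parity that marks over-linking crossings on the first strand marks under-linking crossings on the second. Concretely, for $L'(2,2)$ component $A$ is the over-strand at crossings $1$ and $4$ and the under-strand at $2$ and $3$; virtualizing the even-labelled crossings $\{2,4\}$ leaves one over- and one under-linking crossing, so the span is $0$ rather than $|k-2k_{1}|=2$. Hence your span and $\ell_{D}$ computations (and with them both bounds) are not correct as stated in the all-even sub-case, and ``checking one crossing'' does not repair them. To fix the argument you must either pass to the odd presentation first, as the paper does (which also means $k_{1}$ has to be counted in the relabelled diagram), or redo the span computation with separate per-strand counts of even- and odd-labelled virtualized crossings.
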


\begin{proof} If $p_{1}+ p_{2}$ is even, then $L'(p_{1},p_{2})$ represents a diagram of $(2,p)$ torus link. Therefore, there exist two positive odd integers $p'_{1} ,p'_{2}$ and a diagram $D'$ of $L$, which is obtained from $L'(p'_{1},p'_{2})$ by virtualizing $k$   crossings with the same parity of labelling. Now proof follows directly from Theorem \ref{thm1-pretzel}. 
\end{proof}

\begin{theorem}\label{thm2-pretzel}
Let $D$ be a diagram of virtual link $L$ obtained by virtualizing $k$ classical crossings from the pretzel link $L'(p_{1},p_{2},\ldots,p_{n})$. For an even $n$, if all $p_{1}, p_{2}, \ldots, p_{n}$ are also even, then
$$
U(L)=\Big( \sum_{i=1}^{n}\mid k'_{i}-k_{i} \mid, \sum_{i=1}^{n}\frac{p_{i}-(k_{i}+k'_{i})-\mid k'_{i}-k_{i} \mid}{2}\Big),
$$
where $k'_{i}$ (respectively, $k_{i})$ is the number of crossings virtualized, that are labelled with odd (respectively,  even) integers in $i$--th strand.
\end{theorem}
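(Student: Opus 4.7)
The plan is to mirror the structure of the proof of Theorem~\ref{thm1-pretzel}: establish the lower bound by computing $\operatorname{span}(D)$, the pairwise linking numbers, and the $\ell_{D_i \cup D_j}$ values, then invoke Theorem~\ref{thm-bound}; and establish the upper bound by exhibiting an explicit sequence of virtualizations and crossing changes that trivializes $D$. The essential preliminary observation is structural: with $n$ even and every $p_i$ even, each $i$-th strand of the pretzel diagram has both endpoints on the same side of each of its two boundary arcs, so each strand locally connects a specific pair of components $(D_{a(i)},D_{b(i)})$. In particular, the $p_i$ classical crossings on strand $i$ are linking crossings between this pair only, and the span/linking data of distinct strands decouples.

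For the lower bound, I would argue per strand. On strand $i$, the labelling convention of Fig.~\ref{fig8} alternates over- and under-linking according to the parity of the label, and all $p_i$ crossings carry the same sign. After virtualizing $k_i$ even-labelled and $k'_i$ odd-labelled crossings, the imbalance between remaining over- and under-passes is exactly $|k'_i-k_i|$, so $\operatorname{span}(D_{a(i)}\cup D_{b(i)})=|k'_i-k_i|$ and summing over $i$ yields $\operatorname{span}(D)=\sum_i |k'_i-k_i|$. Since the remaining $p_i-k_i-k'_i$ classical crossings of strand $i$ are all of the same sign, $|\operatorname{lk}(D_{a(i)}\cup D_{b(i)})|=\tfrac{1}{2}(p_i-k_i-k'_i)\geq \tfrac{1}{2}|k'_i-k_i|$, so Proposition~\ref{p2}(1) gives
$$
\ell_{D_{a(i)}\cup D_{b(i)}}=\tfrac{1}{2}(p_i-k_i-k'_i)-\tfrac{1}{2}|k'_i-k_i|.
$$
Each component of $D$ is isotopic (as a virtual knot) to a classical diagram, so every $J_k$ invariant vanishes. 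Plugging into Theorem~\ref{thm-bound} then furnishes the required lower bound.

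For the upper bound, I would construct an explicit unknotting procedure matching the lower bound. On each strand $i$, virtualize $|k'_i-k_i|$ additional crossings, chosen all from the majority parity so that the number of even-labelled and odd-labelled classical crossings remaining on that strand become equal. This realizes $\sum_i |k'_i-k_i|$ virtualizations and makes $\operatorname{span}$ vanish on every pair $D_{a(i)}\cup D_{b(i)}$. The diagram now has $\tfrac{1}{2}(p_i-k_i-k'_i-|k'_i-k_i|)$ over-linking and the same number of under-linking classical crossings on each strand $i$, all of the same sign. Performing a crossing change at every under-linking (equivalently, one parity class of) remaining classical crossing on each strand zeros out every pairwise linking number and reduces each strand to a sequence of cancelling Reidemeister II pairs, yielding a trivial link diagram. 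This accounts for $\sum_i \tfrac{1}{2}(p_i-k_i-k'_i-|k'_i-k_i|)$ crossing changes, matching the lower bound exactly.

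The main obstacle is the structural step of the first paragraph: verifying that when all $p_i$ and $n$ are even the diagram splits into components so that each strand is a pair of parallel arcs connecting two distinct components, and that different strands' contributions to span, linking number, and the $\ell$-invariants genuinely decouple in Theorem~\ref{thm-bound}. Once this decoupling is justified, the per-strand calculations and the explicit unknotting sequence above give the claimed equality directly, as can be read off on the corresponding Gauss diagram in the manner demonstrated in Theorem~\ref{thm1-pretzel}.
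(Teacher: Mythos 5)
Your proposal follows essentially the same route as the paper's proof: compute $\operatorname{span}(D)$ strand by strand using that each strand only links one pair of components and all its crossings have the same sign, apply Proposition~\ref{p2}(1) and Theorem~\ref{thm-bound} for the lower bound (the $J_k$ terms vanishing since components have no self-crossings), and then realize the bound by virtualizing $|k_i-k'_i|$ majority-parity crossings per strand and changing one parity class of the remaining crossings, exactly as the paper does by reducing to the argument of Theorem~\ref{thm1-pretzel}. The argument is correct and matches the paper's level of detail, including the final Gauss-diagram/RII-cancellation observation.
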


\begin{proof} Let  $E_{i}$ and $O_{i}$ denotes the set of crossings labelled with even and odd integers in the $i$--th strand, respectively. Suppose $D$ is a virtual link diagram obtained from $L'(p_{1},p_{2},\ldots,p_{n})$ by virtualizing $k$ crossings at even labelling and $k'$ crossings at odd labelling, respectively. Then $k= \sum_{i=1}^{n}k_{i}$ and  $k'= \sum_{i=1}^{n}k'_{i}$, where $k_{i}$ is the number of crossings from the labelling set $E_{i}$ and $k'_{i}$ is the number of crossings from the labelling set $O_{i}$, respectively.

Because each $p_{i}$ and $n$ are even, $D$ is a $n$-component link. We can represent $D=D_{1}\cup D_{2}\cup \ldots \cup D_{n}$, such that crossings of $D_{1}\cup D_{n}$ and $ D_{i-1}\cup D_{i} $, $2\leq i \leq n$ are represented by crossings of $1$--st strand and $i$--th strand, respectively. Since all the linking crossings of $D_{i}\cup D_{j}$, where $i\neq j$ and $1\leq i,j\leq n$, are of the same sign and there is no crossing between $D_{i}$ and $ D_{j}$ with $2\leq \mid i-j\mid \neq n-1$,  span of $D$ is given by
$$
\begin{gathered}
\operatorname{span} (D) = \operatorname{span} (D_{1} \cup D_{n}) + \sum _{i=2}^{n} \operatorname{span} (D_{i-1}\cup D_{i}) 
\\ = \mid k_{1}-k'_{1}\mid+ \sum _{i=2}^{n}\mid k_{i}-k'_{i}\mid= \sum _{i=1}^{n}\mid k_{i}-k'_{i}\mid.
\end{gathered}
$$   
From Proposition~\ref{p2}(a) and Theorem~\ref{thm-bound}, we have $\ell_D=\frac{1}{2} \sum _{i=1}^{n}(p_{i}-(k_{i}+k'_{i})-\mid k_{i}-k'_{i}\mid )$ and
$$
U(L)\geq \Big( \sum _{i=1}^{n}\mid k_{i}-k'_{i}\mid,\frac{1}{2} \sum _{i=1}^{n}(p_{i}-(k_{i}+k'_{i})-\mid k_{i}-k'_{i}\mid )\Big).
$$
Since $\operatorname{span}(D)= \sum _{i\neq j} |k_{i}-k'_{i}|$, there exists a diagram $D'$ obtained from $D$ by virtualizing $\sum_{i\neq j} |k_{i}-k'_{i}|$ crossings such that $\operatorname{span}(D')=0$.

It is obvious that number of crossing in $D'_{1}\cup D'_{n}$ and $ D'_{i-1}\cup D'_{i}$, $2\leq i \leq n$, are $(p_{1}-k_{i}-k'_{1}-\mid k_{1}-k'_{1} \mid)$  and $p_{i}-(k_{i}+k'_{i})-\mid k_{i}-k'_{i} \mid$, respectively.

Now with the similar argument given in Theorem~ \ref{thm1-pretzel}, all the linking crossings of $D'_{1}\cup D'_{n}$ and $ D'_{i-1}\cup D'_{i}$, $2\leq i \leq n$, can be removed by applying crossing change operation on $\frac{1}{2}(p_{1}-k_{1}-k'_{1}-\mid k_{1}-k'_{1} \mid)$ and $\frac{1}{2}(p_{i}-(k_{i}+k'_{i})-\mid k_{i}-k'_{i} \mid)$ number of crossings, respectively. Since $D'$ has no self-crossing and all the linking crossings in $D'$ are removed by changing $\displaystyle \sum_{i=1}^{n}\frac{p_{i}-(k_{i}+k'_{i})-\mid k_{i}-k'_{i} \mid}{2} $ crossings, hence
$$ 
U(L) \leq \Big ( \sum_{i=1}^{n}\mid k_{i}-k'_{i} \mid, \sum_{i=1}^{n}\frac{p_{i}-(k_{i}+k'_{i})-\mid k_{i}-k'_{i} \mid }{2} \Big).
$$
The proof is completed. 
\end{proof}


\section*{Acknowledgements}

The first and second named authors were supported by DST -- RSF Project INT/RUS/RSF/P-2. The third named author was supported by the Russian Science Foundation (grant no. 16-41-02006). K.~K. and M.~P. would like to thanks to Prof. Akio Kawauchi and Prof. Seiichi Kamada for their valuable discussions and suggestions. 



\begin{thebibliography}{10}

\bibitem{cheng2013polynomial}
Z.~Cheng, H.~Gao, A polynomial invariant of virtual links, 
Journal of Knot Theory and Its Ramifications, 22(12) (2013), paper number 1341002. 

\bibitem{dye2009virtual}
H.A.~Dye, L.H.~Kauffman, Virtual crossing number and the arrow polynomial, 
Journal of Knot Theory and Its Ramifications, 18(10) (2009), 1335--1357.

\bibitem{folwaczny2013linking}
L.C.~Folwaczny, L.H.~Kauffman, A linking number definition of the affine index polynomial and applications, 
Journal of Knot Theory and Its Ramifications, 22(12) (2013), paper number 1341004.
  
\bibitem{im2010index}
Y.H.~Im, K.~Lee, S.Y.~Lee, Index polynomial invariant of virtual links, 
Journal of Knot Theory and Its Ramifications, 19(05) (2010), 709--725.

\bibitem{ishikawa}
M.~Ishikawa, H.~Yanagi, Virtual unknotting number of certain virtual torus knots, Journal of Knot Theory and its Ramifications, 26(11) (2017), paper number 1740070.   

\bibitem{kauffman1999virtual}
L.H.~Kauffman, Virtual knot theory, European Journal of Combinatorics, 20(7) (1999), 663--691.

\bibitem{unknotting}
K.~Kaur, S.~Kamada, A.~Kawauchi, M.~Prabhakar, An unknotting index for virtual knots, to appear in Tokyo Journal of Mathematics.  Preprint version is available at http://www.sci.osaka-cu.ac.jp/ ~kawauchi/unknotting20170824.pdf

\bibitem{kpv-1}
K.~Kaur, M.~Prabhakar, A.~Vesnin, Two-variable polynomial invariants of virtual knots arising from flat virtual knot invariants, to appear in Journal of Knot Theory and Its Ramification. Preprint version is available at  https://arxiv.org/pdf/1803.05191.pdf 

\bibitem{manturov2003multi}
V.O.~Manturov, Multi-variable polynomial invariants for virtual links,
Journal of Knot Theory and Its Ramifications, 12(08) (2003), 1131--1144.

\bibitem{satoh2014writhes}
S.~Satoh, K.~Taniguchi, The writhes of a virtual knot, Fundamenta Mathematicae, 225 (2014), 327--341.

\bibitem{shimizu2011warping}
A.~Shimizu, The warping degree of a link diagram, 
Osaka Journal of Mathematics, 48(1) (2011), 209--231.

\bibitem{silver2003polynomial}
D.S.~Silver, S.G.~Williams, Polynomial invariants of virtual links, 
Journal of Knot Theory and its Ramifications, 12(07) (2003), 987--1000.

\end{thebibliography}
\end{document}